\documentclass[runningheads]{llncs}

\usepackage[T1]{fontenc}
\usepackage{amsmath,amssymb,mathtools}
\usepackage{booktabs,array}
\usepackage{graphicx}
\usepackage{tikz}
\usetikzlibrary{
  positioning,
  fit,
  calc,
  arrows.meta,
  backgrounds,
}

\newcommand{\F}{\mathbb F}

\newcommand{\pf}{\operatorname{pf}}
\newcommand{\Perf}{\operatorname{PerfMatch}}
\newcommand{\sgn}{\operatorname{sgn}}

\title{An Odd Pfaffian Number}
\titlerunning{An Odd Pfaffian Number}

\author{Priyanshu Pant\and Ranveer Singh}
\authorrunning{P. Pant and R. Singh}

\institute{
Indian Institute of Technology Indore, India\\
\email{priyanshupant03@gmail.com, ranveer@iiti.ac.in}}

\begin{document}

\maketitle

\begin{abstract}
The Pfaffian number of a graph is the minimum number of Pfaffians needed to obtain its perfect-matching polynomial by linear combination.
In 2009, Norine conjectured that every Pfaffian number is a power of four.
Miranda and Lucchesi disproved this conjecture in 2011 by constructing a graph of Pfaffian number six, and conjectured instead that every nontrivial Pfaffian number is even.
We disprove their conjecture by proving that the Pfaffian number of \(K_{3,3}\sqcup K_{3,3}\) is \(13\).
We also construct a connected cubic bipartite matching-covered graph with Pfaffian number \(13\).

% We disprove their conjecture by proving that the Pfaffian number of \(K_{3,3}\sqcup K_{3,3}\) is \(13\).
% We also construct a connected cubic bipartite matching-covered graph with Pfaffian number \(13\).

\keywords{Pfaffian number \and perfect matching \and permanent \and Pfaffian orientation}
\end{abstract}

\section{Introduction}
\label{sec:introduction}

All graphs considered in this paper are finite and simple. For a graph
\(G\), we write \(V(G)\) and \(E(G)\) for its vertex and edge sets. A
\emph{perfect matching} of \(G\) is a set of pairwise disjoint edges
covering every vertex, and we denote the set of perfect matchings by
\(\mathcal M(G)\). 
Counting perfect matchings is a fundamental problem in graph theory and
combinatorics~\cite{lovasz1986matching,lucchesi2024perfect}. For an \(n\times n\) matrix \(A=(a_{ij})\), the permanent
and determinant are
\[
\operatorname{per}(A)
=
\sum_{\sigma\in S_n}\prod_{i=1}^{n}a_{i,\sigma(i)},
\qquad
\det(A)
=
\sum_{\sigma\in S_n}
\sgn(\sigma)\prod_{i=1}^{n}a_{i,\sigma(i)}.
\]
For bipartite graphs, counting perfect matchings is equivalent to
computing the permanent of a \(0\)-\(1\) matrix. Although the permanent
and determinant differ only in the signs of their terms, determinants
can be computed in polynomial time, whereas computing the permanent of
a \(0\)-\(1\) matrix is \(\#P\)-complete
\cite{valiant1979complexity}.
Pfaffian orientations provide a graph-theoretic way of replacing the
permanent by a determinant. Let \(G\) be a graph on the
ordered vertex set
\(
V(G)=\{1,2,\ldots,2n\},
\)
and assign an indeterminate \(x_e\) to each edge \(e\in E(G)\). Its
\emph{perfect-matching polynomial} is
\[
\Perf(G)
=
\sum_{M\in\mathcal M(G)}
\prod_{e\in M}x_e.
\]
Let \(D\) be an \emph{orientation} of \(G\), that is, a choice of direction for
each edge. For a
perfect matching
\(
M=\{e_1,\ldots,e_n\},
\)
write \(e_i=u_iv_i\), where \(e_i\) is oriented from \(u_i\) to \(v_i\) in \(D\). Define
\[
\pi_D(M)
=
\begin{pmatrix}
1&2&3&4&\cdots&2n-1&2n\\
u_1&v_1&u_2&v_2&\cdots&u_n&v_n
\end{pmatrix}.
\]
The \emph{sign of \(M\) with respect to \(D\)}, denoted by
\(\sgn_D(M)\), is given by \(\sgn(\pi_D(M))\). This does not depend on the ordering of the edges of
\(M\), since interchanging two edge-pairs is an even permutation. For an edge \(e=ij\), we write \(x_e\) and \(x_{ij}\) interchangeably.
The \emph{weighted skew-adjacency matrix} associated with \(D\)
is the \(2n\times2n\) matrix \(A_D\) whose entries are
\[
(A_D)_{ij}
=
\begin{cases}
 x_{ij},
 &\text{if \(ij\in E(G)\) is oriented from \(i\) to \(j\)},\\
-x_{ij},
 &\text{if \(ij\in E(G)\) is oriented from \(j\) to \(i\)},\\
0,
 &\text{if \(ij\notin E(G)\)}.
\end{cases}
\]
The Pfaffian of \(A_D\) has the expansion
\[
\operatorname{Pf}(A_D)
=
\sum_{M\in\mathcal M(G)}
\sgn_D(M)\prod_{e\in M}x_e.
\]
Thus \(\operatorname{Pf}(A_D)\) contains the same monomials as
\(\Perf(G)\), but the coefficient of each perfect-matching monomial is
either \(+1\) or \(-1\), depending on the orientation \(D\).
If there is an orientation \(D\) for which all perfect matchings have the
same sign, then
\[
\Perf(G)=\pm\operatorname{Pf}(A_D).
\]
Such an orientation is called a \emph{Pfaffian orientation}, and a graph
admitting one is called \emph{Pfaffian}. Since
\[
\operatorname{Pf}(A_D)^2=\det(A_D),
\]
a Pfaffian orientation, when it exists, gives a polynomial-time method for counting the
perfect matchings of \(G\) \cite{cayley1849determinants}.

Kasteleyn proved that every planar graph is Pfaffian
\cite{kasteleyn1967graph}. Thus, every planar graph admits a Pfaffian
representation of its perfect-matching polynomial. For bipartite graphs, recognizing
Pfaffian graphs can be done in polynomial time by the work of
Robertson, Seymour and Thomas~\cite{robertson1999permanents} and
independently McCuaig~\cite{mccuaig2004polya}.
For general graphs, the complexity of deciding whether a Pfaffian orientation exists remains open~\cite{thomas2006survey}.
However, not every graph is Pfaffian. In particular, \(K_{3,3}\) is
non-Pfaffian, meaning that no orientation makes all of its perfect
matchings have the same sign~\cite{little1975characterization,norine2009drawing}.
For a non-Pfaffian graph, one may ask whether its perfect-matching
polynomial can still be obtained as a linear combination of several
Pfaffians. Following Norine~\cite{norine2009drawing}, a graph \(G\) is
\emph{\(k\)-Pfaffian} if there exist orientations
\(D_1,\ldots,D_k\) and real coefficients \(c_1,\ldots,c_k\) such that
\[
\Perf(G)
=
\sum_{i=1}^{k} c_i\operatorname{Pf}(A_{D_i}).
\]
The \emph{Pfaffian number} of \(G\), denoted by \(\pf(G)\), is the
smallest such \(k\).
Thus \(\pf(G)=1\) if and only if \(G\) is
Pfaffian. 
Computing the exact Pfaffian number \(\pf(G)\) generalizes Pfaffian recognition, which asks only whether \(\pf(G)=1\).
To prove an upper bound on \(\pf(G)\), it suffices to give one such
representation, whereas proving a lower bound is difficult,
as one must rule out every representation using fewer Pfaffians.
Even for small graphs, the exact Pfaffian number can therefore be hard
to determine.

Norine proved that every \(3\)-Pfaffian graph is Pfaffian and every
\(5\)-Pfaffian graph is \(4\)-Pfaffian~\cite{norine2009drawing}. 
In particular,
\(
\pf(K_{3,3})=4.
\)
Motivated by these results, Norine conjectured that every Pfaffian number
is a power of four. Miranda
and Lucchesi disproved this conjecture by constructing a graph of
Pfaffian number \(6\), and conjectured instead that every Pfaffian
number greater than one is even~\cite{miranda2011matching}.
Subsequently, Costa Moço, Miranda, and Nunes da Silva characterized the signature matrices associated with Pfaffian number \(6\)~\cite{moco2021signature}.

More recently, general lower bounds on the Pfaffian number have been obtained. Junchaya, Miranda, and Lucchesi showed, in particular, that the Pfaffian number is unbounded~\cite{junchaya2026lower}, while exponential lower bounds, including for connected matching-covered graphs, were established in~\cite{pant2026exponential}. 
These results concern how large the Pfaffian number can be, but leave
open the basic question of which exact values can occur.
Since \(\pf(K_{3,3})=4\), taking the product of two four-term
representations gives
$
\pf(K_{3,3}\sqcup K_{3,3})\le16,
$
where \(K_{3,3}\sqcup K_{3,3}\) denotes the disjoint union of two copies
of \(K_{3,3}\).
It is therefore natural to ask whether the sixteen-term product representation is optimal.  Our main result gives the exact answer.

\begin{theorem}
\label{thm:main}
\(\pf(K_{3,3}\sqcup K_{3,3})=13.\)
\end{theorem}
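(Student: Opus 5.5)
The plan is to turn the statement into a finite problem about sign vectors, prove the upper bound with an explicit $13$-term representation, and prove the lower bound by ruling out every $12$-term representation.

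\emph{Reduction.} Since the monomials $\prod_{e\in M}x_e$ for $M\in\mathcal M(G)$ are distinct and hence linearly independent, a representation $\Perf(G)=\sum_i c_i\operatorname{Pf}(A_{D_i})$ exists exactly when the all-ones vector $\mathbf 1\in\mathbb R^{\mathcal M(G)}$ is a real linear combination of the sign vectors $s_{D_i}=(\sgn_{D_i}(M))_M$.

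For $K_{3,3}$ the six perfect matchings correspond to the permutations $\sigma\in S_3$. Up to a global sign, an orientation has sign vector
\[
v_d(\sigma)=\sgn(\sigma)\prod_{i} d_{i\sigma(i)},
\]
where $d\in\{\pm1\}^{3\times3}$ records the edge directions. I will first verify that the achievable set is exactly
\[
S=\Bigl\{v\in\{\pm1\}^6:\ \prod_\sigma v(\sigma)=-1\Bigr\}.
\]
The product is $-1$ because $S_3$ has three odd permutations. The reverse inclusion comes from counting: the $2^9$ choices of $d$ modulo row and column rescalings give $16$ vectors, and together with global signs this gives all $32$. This recovers $\pf(K_{3,3})=4$, since $\mathbf 1\notin S$.

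For $G=K_{3,3}\sqcup K_{3,3}$, every orientation is a pair of orientations and $\sgn_D(M_1\cup M_2)=\sgn(M_1)\sgn(M_2)$. The achievable sign vectors are therefore exactly the rank-one sign matrices $st^{T}$ with $s,t\in S$. Thus $\pf(G)$ is the least $k$ such that
\[
J=\mathbf 1\mathbf 1^{T}=\sum_{i=1}^k c_i\, s_i t_i^{T}, \qquad s_i,t_i\in S .
\]

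\emph{Upper bound.} I will exhibit explicit $s_i,t_i\in S$ and coefficients $c_i$ with $k=13$, found by computer search and then checked by hand. The natural starting point is the $16$-term product $(\sum_a \alpha_a s_a)(\sum_b\beta_b t_b)^T$ of two Norine $4$-term representations. I will look for three pairs of terms whose combination can be rewritten using relations among the vectors of $S$. Relations such as $s+s'=s''+s'''$ hold whenever the supports of sign disagreement match, so a $2\times2$ block of terms can sometimes be traded for a single rank-one sign matrix. Each such trade saves terms.

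\emph{Lower bound.} This is the main obstacle. Group the terms by left factor, writing $J=\sum_{s}s\,w_s^{T}$ with $w_s\in\operatorname{span}$ of the $t_i$ paired with $s$. For any functional $\varphi$ on $\mathbb R^6$ with $\varphi(\mathbf 1)\neq0$, we get
\[
\mathbf 1=\varphi(\mathbf 1)^{-1}\sum_s\varphi(s)\,w_s ,
\]
so the right factors $t_i$ with $\varphi(s_i)\ne0$ must number at least $4$. The same holds with the roles of left and right exchanged.

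The $\pm1$ characters $\chi_T(v)=\prod_{\sigma\in T}v(\sigma)$ and the Fourier structure of $S$ let one choose $\varphi$ vanishing on many of the $s$'s. Combining such constraints should show that at most $12$ terms force a contradiction. I will first quotient by the symmetry group: automorphisms of $K_{3,3}$, multiplication of $s_i$ by $-1$, sign-flip actions preserving $S$, and swapping the two copies. After this quotient, only a small number of configurations of distinct left factors remain, and each can be tested by a linear-algebra feasibility check.

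I expect the clean counting argument to give only a weaker bound, perhaps $\ge 10$. The gap up to $13$ would then be closed by an exhaustive, symmetry-reduced computer check showing that $J$ is not in the span of any $12$ matrices $st^{T}$. The feasibility of this check, and making it verifiable, is the crux.
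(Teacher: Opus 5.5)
Your reduction to $J=\sum_i c_i s_i t_i^{\mathsf T}$ with $s_i,t_i\in S$ is the paper's. The gap is the lower bound, which is the whole difficulty, and you do not prove it. You expect counting to reach only about $10$ and defer the rest to an exhaustive search over twelve-element sets of the $256$ ordered pairs. You give no argument that this search is small after symmetry reduction, or how it would be certified.

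Your one-sided functional idea is the right first move; what is missing is the choice of functionals. Take $\varphi=e_x^{\mathsf T}$, with $e_x$ ranging over the sixteen normalized sign vectors of coordinate product $+1$, and apply such functionals on both sides. Then $e_x^{\mathsf T}o_a\in\{0,\pm4\}$, and it is nonzero exactly when $x+a\in D=\{\mathbf0,\varepsilon_1,\dots,\varepsilon_4,\mathbf1\}$. So the supports are the blocks of a symmetric $(16,6,2)$ design, and the identity becomes $1=\sum_t\lambda_t f_{a_t}(x)f_{b_t}(y)$ for all $x,y\in\F_2^4$.

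Supports alone already give $k\ge12$, not $10$. Three blocks cover at most $14$ points, so every left degree is at least $4$. Then either $6k\ge80$, or some point has degree $4$ and so lies in an unused block $A$, whence $2k=\sum_{z\in A}d_L(z)\ge24$. Under $k\le12$ this forces $k=12$ and $d_L\equiv4$ on $A$.

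The step from $12$ to $13$ is not a support argument at all. It uses the values of $|f_a|$ on its support: $1/3$ at $x=\mathbf0$ and $1$ elsewhere, coming from $e_{\mathbf0}^{\mathsf T}e_{\mathbf0}=6$ versus $e_x^{\mathsf T}e_{\mathbf0}=\pm2$. For $z\in A$, the four paired right blocks are distinct, and each has a nonzero private point. This forces the four coefficients $\lambda_tf_{a_t}(z)$ to have modulus $1$. Evaluating at $y=\mathbf0$ then writes $3$ as a sum of $r$ signs, so exactly $r=3$ of those blocks contain $\mathbf0$. Double counting over $A$ gives $d_R(\mathbf0)=9$. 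Finally, some block $C\ni\mathbf0$ is used at most once, which gives $29\le\sum_{y\in C}d_R(y)\le28$.

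Smaller points:
\begin{itemize}
\item $\mathbf1\notin S$ only shows $\pf(K_{3,3})\ge2$. Your ``at least four'' step needs that $\mathbf1$ is not in the span of three vectors of $S$; cite Norine, or use the three-block count.
\item Your counting proof that all of $S$ is achieved also needs that distinct rescaling classes give distinct vectors up to sign. The paper instead writes down an explicit preimage.
\item For the upper bound you still owe an explicit certificate, and the trading heuristic cannot work as stated. For $s_1\ne\pm s_2$, the vector $u_1s_1+u_2s_2$ is a multiple of a sign vector only if $u_1u_2=0$. So a genuine four-term $2\times2$ block is never a single $c\,st^{\mathsf T}$.
\end{itemize}
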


Thus the product upper bound of \(16\) is not optimal.
More importantly, since \(13\) is odd, Theorem~\ref{thm:main}
disproves the conjecture of Miranda and Lucchesi~\cite{miranda2011matching}.
They also conjectured that, when the Pfaffian number is even, every minimum Pfaffian representation, after absorbing the signs into the orientations, has all coefficients equal to \(1/2\). Our minimum thirteen-term representation does not have this form. One coefficient is \(1/2\), while the other twelve are \(1/4\).

We further show that the Pfaffian number \(13\) is also attained by a
connected cubic bipartite matching-covered graph. Recall that a graph is
\emph{cubic} if every vertex has degree \(3\), and a graph is
\emph{matching-covered} if it is connected and every edge belongs to a
perfect matching. A \emph{bisubdivision} of a graph is obtained by subdividing some edges
an even number of times.

Let \(\widetilde G\) be the graph obtained from two copies of \(K_{3,3}\) by
bisubdividing the edges \(a_i\alpha_i\) and adding the crossed edges
\(x_1y_2\) and \(x_2y_1\) as in Figure~\ref{fig:Gtilde}.

\begin{figure}[!ht]
\centering
\begin{tikzpicture}[
scale=0.5,
vertex/.style={circle, draw, fill=white, inner sep=1.3pt},
every node/.style={font=\small}
]

% Left copy
\node[vertex] (c1) at (-5.2,1.4) {};
\node[vertex] (b1) at (-4.0,1.4) {};
\node[vertex] (a1) at (-2.8,1.4) {};

\node[vertex] (C1) at (-5.2,-1.4) {};
\node[vertex] (B1) at (-4.0,-1.4) {};
\node[vertex] (A1) at (-2.8,-1.4) {};

\node[above] at (c1) {$c_1$};
\node[above] at (b1) {$b_1$};
\node[above] at (a1) {$a_1$};

\node[below] at (C1) {$\gamma_1$};
\node[below] at (B1) {$\beta_1$};
\node[below] at (A1) {$\alpha_1$};

\draw (c1)--(C1);
\draw (c1)--(B1);
\draw (c1)--(A1);
\draw (b1)--(C1);
\draw (b1)--(B1);
\draw (b1)--(A1);
\draw (a1)--(C1);
\draw (a1)--(B1);

\node[vertex] (x1) at (-1.5,0.9) {};
\node[vertex] (y1) at (-1.5,-0.9) {};

\node[above right] at (x1) {$x_1$};
\node[below right] at (y1) {$y_1$};

\draw (a1)--(x1)--(y1)--(A1);

% Right copy
\node[vertex] (a2) at (2.8,1.4) {};
\node[vertex] (b2) at (4.0,1.4) {};
\node[vertex] (c2) at (5.2,1.4) {};

\node[vertex] (A2) at (2.8,-1.4) {};
\node[vertex] (B2) at (4.0,-1.4) {};
\node[vertex] (C2) at (5.2,-1.4) {};

\node[above] at (a2) {$a_2$};
\node[above] at (b2) {$b_2$};
\node[above] at (c2) {$c_2$};

\node[below] at (A2) {$\alpha_2$};
\node[below] at (B2) {$\beta_2$};
\node[below] at (C2) {$\gamma_2$};

\draw (a2)--(B2);
\draw (a2)--(C2);
\draw (b2)--(A2);
\draw (b2)--(B2);
\draw (b2)--(C2);
\draw (c2)--(A2);
\draw (c2)--(B2);
\draw (c2)--(C2);

\node[vertex] (x2) at (1.5,0.9) {};
\node[vertex] (y2) at (1.5,-0.9) {};

\node[above left] at (x2) {$x_2$};
\node[below left] at (y2) {$y_2$};

\draw (a2)--(x2)--(y2)--(A2);

% Crossed edges
\draw[thick] (x1)--(y2);
\draw[thick] (y1)--(x2);

\end{tikzpicture}
\caption{The graph \(\widetilde G\).}
\label{fig:Gtilde}
\end{figure}

A subgraph \(H\) of a graph \(G\) is called a \emph{spanning subgraph} if \(V(H)=V(G)\). A subgraph \(H\) of \(G\) is called \emph{conformal} if both \(H\) and \(G-V(H)\) have perfect matchings, where the empty graph is regarded as having a perfect matching.
We use two facts proved in \cite{pant2026exponential}. First, if \(H\) is a conformal subgraph of \(G\), then
$
\pf(G)\ge\pf(H).
$
Second, bisubdivision does not change the Pfaffian number of a disjoint
union of matching-covered graphs.

\begin{theorem}
\label{thm:connected-matching-covered}
\(\widetilde G\) is cubic, bipartite, and matching-covered, and
\(\pf(\widetilde G)=13\).
\end{theorem}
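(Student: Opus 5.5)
The plan has three parts: check the structural claims directly, get the lower bound from a conformal subgraph, and get the upper bound by transferring the thirteen-term representation of Theorem~\ref{thm:main} across the crossed edges.

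\emph{Structural claims.} In the left copy, after bisubdivision, the vertex \(a_1\) is adjacent to \(\beta_1,\gamma_1,x_1\) and \(\alpha_1\) is adjacent to \(b_1,c_1,y_1\). The new vertex \(x_1\) is adjacent to \(a_1,y_1,y_2\), and \(y_1\) is adjacent to \(x_1,\alpha_1,x_2\). The right copy is symmetric, so every vertex has degree \(3\).

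For bipartiteness, colour \(a_i,b_i,c_i,y_i\) white and \(\alpha_i,\beta_i,\gamma_i,x_i\) black. Each of the edges \(a_ix_i\), \(x_iy_i\), \(y_i\alpha_i\) and the crossed edges \(x_1y_2\), \(x_2y_1\) then joins the two classes. Connectivity is clear.

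For matching-coveredness, I would invoke the standard fact that a connected \(k\)-regular bipartite graph is matching-covered. By Hall's theorem every edge lies in a perfect matching, since deleting its ends leaves a graph that still satisfies Hall's condition. Alternatively, one can exhibit explicit perfect matchings by hand.

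\emph{Lower bound.} Let \(H\) be the subgraph of \(\widetilde G\) obtained by deleting the two crossed edges. Then \(H\) is spanning and has a perfect matching, so it is conformal, with \(G-V(H)\) empty. Hence \(\pf(\widetilde G)\ge \pf(H)\). Now \(H\) is the disjoint union of two bisubdivisions of \(K_{3,3}\), and \(K_{3,3}\) is matching-covered. By the cited bisubdivision invariance, \(\pf(H)=\pf(K_{3,3}\sqcup K_{3,3})=13\) by Theorem~\ref{thm:main}.

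\emph{Upper bound.} This is the main obstacle. I would classify the perfect matchings of \(\widetilde G\) by how they use the crossed edges. Because \(\{x_i,y_i\}\) is a path between \(a_i\) and \(\alpha_i\), a parity count on each side shows the following. Either a perfect matching uses neither crossed edge, in which case it is a perfect matching of \(H\). Or it uses both crossed edges, in which case \(a_ix_i\) and \(y_i\alpha_i\) are forced out and \(x_iy_i\) is absent. In the second case, \(\{a_i,\alpha_i\}\) must be matched inside \(K_{3,3}-a_i\alpha_i\), i.e.\ the matching restricts to a perfect matching of \((K_{3,3}-a_i\alpha_i)\) on each side.

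So \(\Perf(\widetilde G)\) equals \(\Perf(H)\) plus \(x_{x_1y_2}x_{x_2y_1}\,\Perf(K_{3,3}-a_1\alpha_1)\,\Perf(K_{3,3}-a_2\alpha_2)\). The graph \(K_{3,3}-e\) is planar, hence Pfaffian.

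Take the thirteen orientations of the two-copy union realising Theorem~\ref{thm:main}, transported to \(H\) by the bisubdivision argument. I would try to extend each of them to \(\widetilde G\) by orienting the two crossed edges so that every term contributes the correct coefficients on the second class of matchings. The point is that, restricted to matchings avoiding \(a_i\alpha_i\), the combination \(\sum c_j\operatorname{Pf}\) already reproduces \(\Perf\) in the original union. The second class corresponds bijectively to the matchings of the union that use \(a_1\alpha_1\) on neither side, with the sign change from swapping the path structure being a global factor determined by each orientation.

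The delicate check is the following. When the crossed edges are added, the sign of a type-two matching under an orientation \(D_j\) must equal a fixed constant times its sign in the union. That constant can be arranged to be the same for every \(j\) by choosing the crossed-edge orientations per \(j\). I would verify this via the usual sign-change-along-alternating-cycle rule for the cycle \(x_1y_2\,y_2x_2\ldots\). If it fails for some \(j\), my fallback is to exploit the freedom in \(D_j\) by reversing all edges at a vertex, which leaves Pfaffians unchanged up to sign, to adjust the two crossed-edge orientations independently.
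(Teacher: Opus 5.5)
Your proposal is correct and follows the paper's proof. The lower bound uses the conformal spanning subgraph $H$ with bisubdivision invariance, and the upper bound extends each of the thirteen terms so that a matching using both crossed edges gets the same sign as the matching using $x_1y_1,x_2y_2$ instead. The paper phrases this with signed determinants, giving the crossed edges sign product $-1$ to cancel the exchange of the columns $x_1,x_2$; that is exactly your $4$-cycle $x_1y_2x_2y_1$ being oddly oriented. Your regular-bipartite argument for matching-coveredness is also a valid alternative to the paper's explicit matchings.

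One point to tighten: the constant must be $+1$, not merely the same for all $j$. This is immediate, because the crossed edges are new and their orientations are free. So the upper-bound argument cannot fail and no fallback is needed; reversing edges at a vertex could not change the parity of an even cycle anyway.
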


\begin{proof}
It is immediate from the construction that \(\widetilde G\) is
connected, cubic, and bipartite. It is also matching-covered since every
edge in either bisubdivided copy lies in a perfect matching avoiding the
crossed edges, while the two crossed edges lie together in a perfect
matching.
For the lower bound, delete the two crossed edges \(x_1y_2\) and
\(x_2y_1\) from \(\widetilde G\), and let \(H\) be the resulting
subgraph. Then \(H\) is a bisubdivision of
\(K_{3,3}\sqcup K_{3,3}\). Since \(H\) is spanning and has a perfect
matching, it is conformal in \(\widetilde G\). Therefore
\[
\pf(\widetilde G)
\ge
\pf(H)
=
\pf(K_{3,3}\sqcup K_{3,3})
=
13,
\]
where the last equality follows from Theorem~\ref{thm:main}.
The upper bound \(\pf(\widetilde G)\le 13\) is proved in
Appendix~\ref{app:connected-upper}, completing the proof.
\end{proof}

The remainder of the paper is organized as follows. Section~\ref{sec:preliminaries}
develops the matrix formulation used throughout the paper.
Section~\ref{sec:upper-bound} proves the upper bound, and
Section~\ref{sec:lower-bound} proves the matching lower bound.
The omitted proofs are collected in the appendix.

\section{Preliminaries}
\label{sec:preliminaries}

We first translate the Pfaffian-number problem into a matrix problem. For this, we use signed determinants and the special structure of \(K_{3,3}\).

\subsection{Bipartite graphs and signed determinants}

Let \(G=(U\sqcup V,E)\) be a bipartite graph with \(|U|=|V|=n\).
Its \emph{weighted biadjacency matrix}, denoted by \(B_G\), is defined by
\[
(B_G)_{uv}
=
\begin{cases}
x_{uv},&\text{if }uv\in E(G),\\
0,&\text{otherwise}.
\end{cases}
\]
We write \(\circ\) for the entrywise product of matrices of the same
size. Then
\[
\Perf(G)=\operatorname{per}(B_G).
\]
For bipartite graphs, Pfaffian representations can be written in terms
of signed determinants; see
\cite[Lemmas~2.1 and~2.2]{pant2026exponential}.

\begin{lemma}
\label{lem:bipartite-matrix-formulation}
The Pfaffian number \(\pf(G)\) is the least \(k\ge1\) for which there exist \(S_i\in\{\pm1\}^{n\times n}\) and \(c_i\in\mathbb R\) satisfying
$$
\operatorname{per}(B_G)=\sum_{i=1}^{k}c_i\det(S_i\circ B_G).
$$
\end{lemma}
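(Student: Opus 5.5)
The plan is to identify orientations of \(G\) with sign matrices \(S\in\{\pm1\}^{n\times n}\) in such a way that \(\operatorname{Pf}(A_D)=\varepsilon_n\det(S\circ B_G)\). Here \(\varepsilon_n\in\{\pm1\}\) is a sign depending only on \(n\) and on the chosen vertex ordering. Once this is established, any representation \(\Perf(G)=\sum c_i\operatorname{Pf}(A_{D_i})\) with \(k\) terms converts into \(\operatorname{per}(B_G)=\sum (\varepsilon_n c_i)\det(S_i\circ B_G)\) with \(k\) terms, and conversely. The two minima therefore coincide, since \(\Perf(G)=\operatorname{per}(B_G)\).

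\textbf{Ordering and correspondence.} First, the ordering of \(V(G)\) is harmless. Relabeling the vertices by a permutation \(\tau\) replaces \(A_D\) by \(P^{T}A_DP\) for a permutation matrix \(P\). This multiplies every Pfaffian by the same sign \(\sgn(\tau)\), which can be absorbed into the coefficients. So we may order \(U=\{u_1,\dots,u_n\}\) as \(1,\dots,n\) and \(V=\{v_1,\dots,v_n\}\) as \(n+1,\dots,2n\). Given an orientation \(D\), define \(S_{ij}=+1\) if the edge \(u_iv_j\) is oriented from \(u_i\) to \(v_j\), and \(S_{ij}=-1\) otherwise. The value of \(S_{ij}\) on non-edges is arbitrary, since it is multiplied by \(0\) in \(S\circ B_G\). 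Conversely, every \(S\) determines an orientation \(D\) by reading off its entries on the edges. With this choice,
\[
A_D=\begin{pmatrix}0 & S\circ B_G\\ -(S\circ B_G)^{T} & 0\end{pmatrix}.
\]

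\textbf{Main step: the block Pfaffian identity.} It remains to show that \(\operatorname{Pf}(A_D)=(-1)^{n(n-1)/2}\det(S\circ B_G)\). I would verify this directly from the matching expansion rather than quote it. A perfect matching \(M\) corresponds to a permutation \(\sigma\in S_n\), where \(u_i\) is matched to \(v_{\sigma(i)}\).
\begin{itemize}
\item Suppose first that every edge of \(M\) is oriented from \(U\) to \(V\). Then \(\pi_D(M)\) sends the sequence to \((1,\,n+\sigma(1),\,2,\,n+\sigma(2),\dots)\).
\item Sorting this sequence costs \(\sgn(\sigma)\), for rearranging the \(V\)-entries, times the sign of the interleaving shuffle, which is \((-1)^{n(n-1)/2}\).
\item Each edge oriented from \(V\) to \(U\) swaps one pair \((u_i,v_{\sigma(i)})\). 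This contributes a factor \(-1=S_{i\sigma(i)}\).
\end{itemize}
Hence \(\sgn_D(M)\prod_{e\in M}x_e=(-1)^{n(n-1)/2}\sgn(\sigma)\prod_i S_{i\sigma(i)}(B_G)_{i\sigma(i)}\). Summing over \(M\), equivalently over the \(\sigma\) with nonzero product, gives the identity.

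The only step needing real care is this sign bookkeeping: computing the shuffle sign correctly and checking that reversing an edge contributes exactly the factor \(S_{ij}\). Everything else is absorption of global signs into the real coefficients \(c_i\).
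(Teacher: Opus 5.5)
Your proof is correct. The correspondence $(A_D)_{u_i,v_j}=S_{ij}x_{ij}$ gives $\operatorname{Pf}(A_D)=(-1)^{n(n-1)/2}\det(S\circ B_G)$; your interleaving sign $(-1)^{n(n-1)/2}$ and the factor $S_{i\sigma(i)}$ for each reversed edge both check out. The relabeling and this constant sign are then absorbed into the coefficients in both directions. The paper gives no proof of this lemma and instead cites Lemmas~2.1 and~2.2 of its companion paper on this Pfaffian-to-signed-determinant reduction, so you have supplied the details it defers.
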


\subsection{The coefficient vectors of \(K_{3,3}\)}

Let \(B=(x_{ij})_{i,j\in[3]}\) be the weighted biadjacency matrix of
\(K_{3,3}\). We order the six permutations in \(S_3\) as
\begin{equation}
123,\quad132,\quad213,\quad231,\quad312,\quad321.
\label{eq:permutation-order}
\end{equation}
For \(\pi\in S_3\), write
\(x_\pi=\prod_{i=1}^{3}x_{i,\pi(i)}\). Then
\(
\operatorname{per}(B)=\sum_{\pi\in S_3}x_\pi.
\)
For a sign matrix \(S=(s_{ij})\in\{\pm1\}^{3\times3}\), the determinant \(\det(S\circ B)\) has the same six monomials, with possibly different signs.
For a sign matrix \(S=(s_{ij})\in\{\pm1\}^{3\times3}\), define
\(v_S\in\{\pm1\}^{6}\) by
\[
v_S(\pi)
=
\sgn(\pi)\prod_{i=1}^{3}s_{i,\pi(i)}.
\]
Thus
\(
\det(S\circ B)
=
\sum_{\pi\in S_3}v_S(\pi)x_\pi.
\)
The next lemma characterizes exactly which sign vectors can occur.

\begin{lemma}
\label{lem:one-copy-coefficient-vectors}
A vector \(v\in\{\pm1\}^{6}\) is equal to \(v_S\) for some
\(S\in\{\pm1\}^{3\times3}\) if and only if
\(\prod_{j=1}^{6}v_j=-1\).
\end{lemma}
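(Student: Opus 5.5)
The plan is to prove the two directions separately, working multiplicatively by identifying \(\{\pm1\}\) with \(\mathbb F_2\) via \(\pm1\mapsto 0/1\).

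\textbf{Necessity.} Compute the product of all six coordinates of \(v_S\):
\[
\prod_{\pi\in S_3}v_S(\pi)=\prod_{\pi\in S_3}\sgn(\pi)\cdot\prod_{\pi\in S_3}\prod_{i=1}^{3}s_{i,\pi(i)}.
\]
There are three odd permutations in \(S_3\), so the first factor is \((-1)^3=-1\). For each entry \(s_{ij}\), exactly \(|\{\pi:\pi(i)=j\}|=2\) permutations use position \((i,j)\). Hence the second factor is \(\prod_{i,j}s_{ij}^2=1\), and the product is \(-1\).

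\textbf{Sufficiency.} Consider the map \(S\mapsto v_S\). Multiplying a single entry \(s_{ij}\) by \(-1\) flips exactly the two coordinates \(\pi\) with \(\pi(i)=j\), and this is a group homomorphism from \(\{\pm1\}^{3\times 3}\) to \(\{\pm1\}^6\) up to the fixed translate given by \(S=J\). So the image of the map is the coset \(v_J\cdot W\), where \(W\) is the subgroup generated by the nine vectors \(w_{ij}\) whose \(-1\) entries sit at the two permutations sending \(i\mapsto j\). Since \(v_J\) has product \(-1\), it suffices to show that \(W\) equals the full even subgroup \(\{v:\prod v_j=1\}\), which has order \(2^5\).

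\textbf{Rank computation.} In \(\mathbb F_2\)-language, the task is to show that the \(9\times 6\) incidence matrix of (cell, permutation) pairs has rank \(5\); it is at most \(5\) because every column sum is even. Concretely, I would show that every pair \(\{\pi,\sigma\}\) of distinct permutations is a sum of generators. Two distinct permutations in \(S_3\) agree in either exactly one position or none. If they agree in one position \((i,j)\), then \(w_{ij}\) is exactly the pair \(\{\pi,\sigma\}\). The pairs agreeing in a position are precisely those with \(\pi^{-1}\sigma\) a transposition. These edges form the bipartite graph \(K_{3,3}\) between even and odd permutations, which is connected. Pairs along the edges of a connected graph on six vertices generate all even-size subsets, giving rank \(5\).

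The main obstacle is essentially only this rank/connectivity check, which is a small finite verification, so no step should be hard. As a fallback, the claim can simply be verified by listing the \(2^9\) sign matrices, since they produce \(2^9/2^4\)-fold coverage of the \(32\) admissible vectors.
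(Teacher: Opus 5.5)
Your proof is correct. The necessity half is the same as the paper's, but the sufficiency half takes a genuinely different route.

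The paper proves sufficiency by an explicit construction. It fixes $s_{23}=s_{31}=s_{32}=s_{33}=1$ and leaves $p,q,r,u,t$ free in the remaining entries. It then reads off $v_S=(pt,-p,-qu,q,ru,-rt)$ and solves for the five parameters from $v_1,\dots,v_5$. The condition $\prod_j v_j=-1$ is used only to match the sixth coordinate.

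You instead treat $S\mapsto v_S$ as an affine map over $\mathbb{F}_2$ with image $v_J W$. You prove that $W$ is the full even-weight subgroup by observing that the nine generators $w_{ij}$ are exactly the edges of the ``agree in one position'' graph on $S_3$. That graph is the connected graph $K_{3,3}$ between even and odd permutations.

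The two routes buy different things:
\begin{itemize}
\item The paper's route gives a closed-form preimage for every admissible $v$, checked in one line. This is convenient if one wants concrete signings realizing the vectors $o_{a_t},o_{b_t}$ of the certificate.
\item Your route needs no guessed parametrization and explains the structure. The image is a $5$-dimensional affine subspace, and every fibre has size $2^{9-5}=2^4$; the kernel consists of the sign matrices $(r_ic_j)$ with $\prod_i r_i\prod_j c_j=1$. This matches the paper's four fixed entries and five free signs.
\end{itemize}
Your connectivity shortcut is, however, special to the $3\times 3$ case, where each cell lies in exactly two permutations.

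There are two harmless slips. First, in your $9\times 6$ cell-by-permutation matrix the column sums are $3$, not even. The rank is at most $5$ because each row $w_{ij}$ has weight $2$. In any case only the lower bound on the rank is needed for sufficiency, since the upper bound is the necessity direction. Second, in the fallback, each of the $32$ admissible vectors is hit $2^9/2^5=2^4$ times, not $2^9/2^4$ times.
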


The proof is given in Appendix~\ref{app:coefficient-vectors}.
By Lemma~\ref{lem:one-copy-coefficient-vectors}, the coefficient vectors
of signed \(3\times3\) determinants are precisely the vectors in
\(\{\pm1\}^6\) whose coordinate product is \(-1\). Since an overall sign
can be included in the scalar coefficient, we normalize these vectors by
requiring their first coordinate to be \(1\).

Let \(V=\F_2^4\). For \(a=(a_1,a_2,a_3,a_4)\in V\), define
\[
o_a
=
\begin{pmatrix}
1, &
(-1)^{a_1}, &
(-1)^{a_2}, &
(-1)^{a_3}, &
(-1)^{a_4}, &
-(-1)^{a_1+a_2+a_3+a_4}
\end{pmatrix}^{\mathsf T}.
\label{eq:odd-vector}
\]

\begin{corollary}
\label{cor:normalized-coefficient-vectors}
For every \(S\in\{\pm1\}^{3\times3}\), there exist unique
\(\varepsilon\in\{\pm1\}\) and \(a\in V\) such that
\[
v_S=\varepsilon\,o_a.
\]
\end{corollary}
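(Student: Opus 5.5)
We read the corollary off directly from Lemma~\ref{lem:one-copy-coefficient-vectors}. Fix \(S\in\{\pm1\}^{3\times3}\) and put \(v=v_S\in\{\pm1\}^6\), indexed in the order~\eqref{eq:permutation-order}, so that \(v_1=v_S(123)\). We take \(\varepsilon=v_1\), which is forced since every \(o_a\) has first coordinate \(1\). Let \(w=\varepsilon v\). Then \(w\in\{\pm1\}^6\) with \(w_1=1\), and
\[
\prod_{j=1}^6 w_j=\varepsilon^6\prod_{j=1}^6 v_j=-1
\]
by the ``only if'' direction of Lemma~\ref{lem:one-copy-coefficient-vectors}.

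Next we define \(a\in V=\F_2^4\) by \((-1)^{a_i}=w_{i+1}\) for \(i=1,\dots,4\). This is possible and unique because \(w_{i+1}\in\{\pm1\}\) and \(t\mapsto(-1)^t\) is a bijection \(\F_2\to\{\pm1\}\). The product condition then gives
\[
w_6=-\bigl(w_1w_2w_3w_4w_5\bigr)^{-1}=-(-1)^{a_1+a_2+a_3+a_4},
\]
which is exactly the sixth coordinate of \(o_a\). Together with \(w_1=1\) and the choice of \(a\), this yields \(w=o_a\), so \(v_S=\varepsilon\,o_a\).

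For uniqueness, suppose \(\varepsilon\,o_a=\varepsilon'\,o_{a'}\). Comparing first coordinates gives \(\varepsilon=\varepsilon'\). Then \(o_a=o_{a'}\), and comparing coordinates \(2\) through \(5\) gives \(a=a'\).

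No step here is a real obstacle; all the substance is in Lemma~\ref{lem:one-copy-coefficient-vectors}, of which only the ``only if'' direction is needed. The one point to be careful about is that \(\varepsilon\) is defined via the first permutation \(123\) in the fixed ordering, which is what makes the normalization consistent.
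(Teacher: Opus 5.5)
Your proposal is correct and follows essentially the same route as the paper's proof. You take \(\varepsilon\) to be the first coordinate of \(v_S\), read off \(a\) from coordinates \(2\)--\(5\), and use the product condition of Lemma~\ref{lem:one-copy-coefficient-vectors} to force the sixth coordinate; your uniqueness check is just a slightly more explicit version of the paper's.
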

The proof is also given in
Appendix~\ref{app:coefficient-vectors}.

\subsection{The matrix formulation}

Let \(B_1=(x_{ij})\) and \(B_2=(y_{ij})\) be the weighted biadjacency
matrices of two copies of \(K_{3,3}\). For \(\pi,\tau\in S_3\), write
\(x_\pi=\prod_{i=1}^{3}x_{i,\pi(i)}\) and
\(y_\tau=\prod_{i=1}^{3}y_{i,\tau(i)}\).
For a polynomial
\(F=\sum_{\pi,\tau\in S_3}q_{\pi,\tau}x_\pi y_\tau\), define its
\emph{coefficient matrix} by
\(C(F)=(q_{\pi,\tau})_{\pi,\tau\in S_3}\), with rows and columns indexed
by \(S_3\) in the order fixed above. Thus the \((\pi,\tau)\)-entry of
\(C(F)\) is the coefficient of \(x_\pi y_\tau\) in \(F\).
Let \(J_6\) denote the \(6\times6\) all-ones matrix.

\begin{lemma}
\label{lem:finite-formulation}
The Pfaffian number \(\pf(K_{3,3}\sqcup K_{3,3})\) is the least
\(k\ge1\) for which there exist \(a_t,b_t\in V\) and \(c_t\in\mathbb R\)
such that

$$
J_6=\sum_{t=1}^{k}c_t\,o_{a_t}o_{b_t}^{\mathsf T}.
$$

\end{lemma}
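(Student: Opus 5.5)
We will prove Lemma~\ref{lem:finite-formulation} by applying Lemma~\ref{lem:bipartite-matrix-formulation} to \(G=K_{3,3}\sqcup K_{3,3}\), and then comparing coefficients monomial by monomial.

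\emph{Block structure.} Order the vertices so that the weighted biadjacency matrix of \(G\) is block diagonal,
\[
B_G=\begin{pmatrix}B_1&0\\0&B_2\end{pmatrix}.
\]
Then \(\operatorname{per}(B_G)=\operatorname{per}(B_1)\operatorname{per}(B_2)=\sum_{\pi,\tau}x_\pi y_\tau\), so \(C(\operatorname{per}(B_G))=J_6\).

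Now take any \(S\in\{\pm1\}^{6\times6}\). The off-diagonal blocks of \(S\circ B_G\) vanish, so only the two diagonal \(3\times3\) blocks \(S_1,S_2\) of \(S\) matter. Hence
\[
\det(S\circ B_G)=\det(S_1\circ B_1)\det(S_2\circ B_2)=\sum_{\pi,\tau}v_{S_1}(\pi)v_{S_2}(\tau)\,x_\pi y_\tau,
\]
so \(C(\det(S\circ B_G))=v_{S_1}v_{S_2}^{\mathsf T}\). By Corollary~\ref{cor:normalized-coefficient-vectors},
\[
v_{S_1}v_{S_2}^{\mathsf T}=\varepsilon_1\varepsilon_2\,o_{a}o_{b}^{\mathsf T}
\]
for some \(a,b\in V\).

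\emph{Forward direction.} Suppose \(\operatorname{per}(B_G)=\sum_{i=1}^k c_i\det(S_i\circ B_G)\). The monomials \(x_\pi y_\tau\) for distinct pairs \((\pi,\tau)\) are distinct monomials in independent indeterminates. So \(C\) is linear and injective on the span of these monomials, and taking \(C\) of both sides gives
\[
J_6=\sum_{i} c_i\varepsilon_i\,o_{a_i}o_{b_i}^{\mathsf T}.
\]
This is a representation of the required form with the same \(k\), after absorbing the signs \(\varepsilon_i\) into the coefficients.

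\emph{Converse.} Suppose \(J_6=\sum_t c_t\,o_{a_t}o_{b_t}^{\mathsf T}\). We need each \(o_a\) to be realized, that is, \(o_a=v_S\) for some \(S\). This holds by Lemma~\ref{lem:one-copy-coefficient-vectors}, because the product of the coordinates of \(o_a\) is
\[
-\prod_{i}(-1)^{a_i}\cdot(-1)^{\sum a_i}=-1.
\]
Choose \(S_1,S_2\) with \(v_{S_1}=o_{a_t}\) and \(v_{S_2}=o_{b_t}\), and let \(S_t\) be the block matrix with these diagonal blocks and, say, \(+1\) in the off-diagonal blocks. Then \(C(\det(S_t\circ B_G))=o_{a_t}o_{b_t}^{\mathsf T}\). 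Since \(C\) is injective, \(\operatorname{per}(B_G)=\sum_t c_t\det(S_t\circ B_G)\). So the least \(k\) is the same in both formulations, and by Lemma~\ref{lem:bipartite-matrix-formulation} it equals \(\pf(G)\).

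The only point needing care is that the determinant of a block-diagonal matrix factors with no extra sign. This holds because the vertex ordering within each side is fixed consistently across the two blocks. No step presents a genuine obstacle.
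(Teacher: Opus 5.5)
Your proof is correct and follows the same route as the paper. Like the paper, you factor the permanent and each signed determinant of the block-diagonal matrix $B_1\oplus B_2$, read off the coefficient matrices $J_6$ and $v_{S_1}v_{S_2}^{\mathsf T}=\varepsilon_1\varepsilon_2\,o_ao_b^{\mathsf T}$, and invoke Lemma~\ref{lem:bipartite-matrix-formulation}. You are more explicit than the paper, which leaves two things implicit: the injectivity of $C$, and the converse that every $o_a$ is realized as some $v_S$ via the coordinate-product criterion of Lemma~\ref{lem:one-copy-coefficient-vectors}.

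Your closing remark about vertex ordering is unnecessary. The determinant of a block-diagonal matrix always factors, and any reordering would only introduce a global sign that can be absorbed into $c_t$.
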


\begin{proof}
The weighted biadjacency matrix of \(K_{3,3}\sqcup K_{3,3}\) is
\(B_1\oplus B_2\), and hence
$$
\operatorname{per}(B_1\oplus B_2)
=\operatorname{per}(B_1)\operatorname{per}(B_2)
=\sum_{\pi,\tau\in S_3}x_\pi y_\tau.
$$
Therefore
\(C(\operatorname{per}(B_1)\operatorname{per}(B_2))=J_6\).
Similarly, every signed determinant of \(B_1\oplus B_2\) factors as
\(\det(S\circ B_1)\det(T\circ B_2)\) for some
\(S,T\in\{\pm1\}^{3\times3}\). The coefficient of \(x_\pi y_\tau\) is
\(v_S(\pi)v_T(\tau)\), so its coefficient matrix is
\(v_Sv_T^{\mathsf T}\).
By Corollary~\ref{cor:normalized-coefficient-vectors},
\(v_S=\varepsilon o_a\) and \(v_T=\delta o_b\) for some
\(a,b\in V\) and \(\varepsilon,\delta\in\{\pm1\}\). Thus
$$
v_Sv_T^{\mathsf T}=\varepsilon\delta\,o_a o_b^{\mathsf T}.
$$
The sign \(\varepsilon\delta\) can be included in the scalar coefficient,
and the result follows from Lemma~\ref{lem:bipartite-matrix-formulation}.
\end{proof}

\section{The upper bound}
\label{sec:upper-bound}

By Lemma~\ref{lem:finite-formulation}, it is enough to express \(J_6\)
as a linear combination of thirteen matrices of the form
\(o_a o_b^{\mathsf T}\). For a bit string \(a_1a_2a_3a_4\), we write
\(o_{a_1a_2a_3a_4}\) for \(o_{(a_1,a_2,a_3,a_4)}\).
The thirteen triples \((\mu_t,a_t,b_t)\) are listed in Table~\ref{tab:certificate}.

\begin{table}[ht]
\centering
\caption{The thirteen triples \((\mu_t,a_t,b_t)\).}
\label{tab:certificate}
\small
\setlength{\tabcolsep}{4pt}
\begin{tabular}{crcc@{\qquad}crcc}
\toprule
\(t\) & \(\mu_t\) & \(a_t\) & \(b_t\) &
\(t\) & \(\mu_t\) & \(a_t\) & \(b_t\)\\
\midrule
1 &  1 & 1111 & 1111 & 8  & -1 & 1001 & 0111\\
2 &  1 & 0000 & 0100 & 9  & -1 & 0101 & 0101\\
3 &  1 & 0100 & 0000 & 10 & -1 & 1101 & 0011\\
4 &  2 & 0001 & 0001 & 11 &  1 & 1100 & 0001\\
5 & -1 & 1111 & 1000 & 12 &  1 & 0010 & 1000\\
6 &  1 & 1000 & 0010 & 13 &  1 & 0001 & 0110\\
7 & -1 & 0010 & 1111 &    &     &      &     \\
\bottomrule
\end{tabular}
\end{table}

\begin{theorem}[Upper bound]
\label{thm:upper-bound}
\(
\pf(K_{3,3}\sqcup K_{3,3})\le13.
\)
\end{theorem}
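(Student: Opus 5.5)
By Lemma~\ref{lem:finite-formulation}, it suffices to exhibit thirteen real coefficients \(c_t\) and pairs \(a_t,b_t\in V\) with \(J_6=\sum_{t=1}^{13}c_t\,o_{a_t}o_{b_t}^{\mathsf T}\). I will take the pairs from Table~\ref{tab:certificate} and set \(c_t=\mu_t/4\), so the claim becomes the matrix identity
\[
M:=\sum_{t=1}^{13}\mu_t\,o_{a_t}o_{b_t}^{\mathsf T}=4J_6 .
\]
This choice gives coefficient \(1/2\) for \(t=4\) and \(1/4\) for the other twelve terms, matching the description in the introduction.

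To verify the identity, I will rewrite the coordinates of \(o_a\) as characters of \(V=\F_2^4\). Index the six coordinates by \(u_1=0\), \(u_2=e_1,\dots,u_5=e_4\), \(u_6=1111\), and put \(\eta=(1,1,1,1,1,-1)\). Then
\[
o_a(i)=\eta_i(-1)^{u_i\cdot a}.
\]
Consequently the \((i,j)\)-entry of \(M\) is
\[
M_{ij}=\eta_i\eta_j\sum_t\mu_t(-1)^{u_i\cdot a_t+u_j\cdot b_t}.
\]
So I must check that this character sum equals \(4\eta_i\eta_j\) for all \(36\) pairs \((i,j)\): it should be \(+4\) except when exactly one of \(i,j\) equals \(6\), where it should be \(-4\). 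As a first sanity check, the \((1,1)\) entry is \(\sum_t\mu_t=1+1+1+2-1+1-1-1-1-1+1+1+1=4\), as required.

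To organize the remaining entries, I will group the terms by \(b_t\). For each \(b\) occurring in the table, set \(w_b=\sum_{t:\,b_t=b}\mu_t\,o_{a_t}\), so that \(M=\sum_b w_b\,o_b^{\mathsf T}\). There are only eight distinct values of \(b\), namely \(1111,0100,0000,0001,1000,0010,0111,0101,0011,0110\) with some repeated, so this reduces the check to a few short vector sums followed by one small outer-product sum. I will then read off the rows of \(M\) one coordinate of \(b\) at a time and confirm that each row is \((4,4,4,4,4,4)\). Row \(1\) and column \(1\) amount to checking \(\sum_t\mu_t o_{b_t}=4\mathbf 1\) and \(\sum_t\mu_t o_{a_t}=4\mathbf 1\), which I will verify separately first as an independent check.

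There is no conceptual obstacle here. The main risk is bookkeeping: sign errors, especially in the sixth coordinate, where the extra minus sign and the parity of \(a\) interact. To guard against this, I will do each entry both via the character formula and via the explicit vectors \(o_a\). I will also use the symmetry under transposition, since the set of pairs \((a_t,b_t)\) is nearly symmetric (for example \(t=2,3\) and \(t=6,12\) swap), as a cross-check. Once \(M=4J_6\) is confirmed, Lemma~\ref{lem:finite-formulation} yields \(\pf(K_{3,3}\sqcup K_{3,3})\le 13\).
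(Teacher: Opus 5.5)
Your proposal is correct and follows the paper's proof. You use the same certificate from Table~\ref{tab:certificate} with $c_t=\mu_t/4$, reduce via Lemma~\ref{lem:finite-formulation} to the identity $\sum_t \mu_t\, o_{a_t}o_{b_t}^{\mathsf T}=4J_6$, and verify it by direct computation; the paper does this as the product $P^{\mathsf T}Q$, and your character-sum bookkeeping is just an organized way of doing the same multiplication (the identity does check out in all $36$ entries). One small slip: the table has ten distinct values of $b_t$, not eight, as your own list shows.
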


\begin{proof}
For \(1\le t\le13\), let \(P,Q\in\mathbb R^{13\times6}\) be defined row-wise by
\(P_{t,*}=\mu_t o_{a_t}^{\mathsf T}\) and
\(Q_{t,*}=o_{b_t}^{\mathsf T}\), where
\((\mu_t,a_t,b_t)\) are the triples in Table~\ref{tab:certificate}. Then
\[
P^{\mathsf T}Q
=
\sum_{t=1}^{13}
\mu_t\,o_{a_t}o_{b_t}^{\mathsf T}.
\]
Using the definition of \(o_a\), we obtain
{\scriptsize
\[
P=
\begin{pmatrix}
 1&-1&-1&-1&-1&-1\\
 1& 1& 1& 1& 1&-1\\
 1& 1&-1& 1& 1& 1\\
 2& 2& 2& 2&-2& 2\\
-1& 1& 1& 1& 1& 1\\
 1&-1& 1& 1& 1& 1\\
-1&-1&-1& 1&-1&-1\\
-1& 1&-1&-1& 1& 1\\
-1&-1& 1&-1& 1& 1\\
-1& 1& 1&-1& 1&-1\\
 1&-1&-1& 1& 1&-1\\
 1& 1& 1&-1& 1& 1\\
 1& 1& 1& 1&-1& 1
\end{pmatrix},
\qquad
Q=
\begin{pmatrix}
 1&-1&-1&-1&-1&-1\\
 1& 1&-1& 1& 1& 1\\
 1& 1& 1& 1& 1&-1\\
 1& 1& 1& 1&-1& 1\\
 1&-1& 1& 1& 1& 1\\
 1& 1& 1&-1& 1& 1\\
 1&-1&-1&-1&-1&-1\\
 1& 1&-1&-1&-1& 1\\
 1& 1&-1& 1&-1&-1\\
 1& 1& 1&-1&-1&-1\\
 1& 1& 1& 1&-1& 1\\
 1&-1& 1& 1& 1& 1\\
 1& 1&-1&-1& 1&-1
\end{pmatrix}.
\]
}
Direct multiplication gives
\(
P^{\mathsf T}Q
=
4J_6.
\)
Hence
\[
4J_6
=
\sum_{t=1}^{13}
\mu_t\,o_{a_t}o_{b_t}^{\mathsf T}.
\]
Dividing by \(4\) and applying
Lemma~\ref{lem:finite-formulation} gives
\(
\pf(K_{3,3}\sqcup K_{3,3})\le13.
\)
\end{proof}

\section{The lower bound}
\label{sec:lower-bound}

We now prove that
\(
\pf(K_{3,3}\sqcup K_{3,3})\ge13.
\)
Suppose, for contradiction, that
\(
\pf(K_{3,3}\sqcup K_{3,3})\le12.
\)
By Lemma~\ref{lem:finite-formulation}, there exist an integer
\(k\le12\), elements \(a_t,b_t\in V\), and nonzero coefficients
\(c_t\in\mathbb R\), for \(1\le t\le k\), such that
\begin{equation}
J_6
=
\sum_{t=1}^{k}
c_t\,o_{a_t}o_{b_t}^{\mathsf T}.
\label{eq:hypothetical-identity}
\end{equation}
By combining terms having the same ordered pair \((a_t,b_t)\), and
deleting any resulting zero coefficients, we may assume that the ordered
pairs
\(
(a_1,b_1),\ldots,(a_k,b_k)
\)
are distinct.

\subsection{From matrices to functions}

Our first step is to convert the matrix identity
\eqref{eq:hypothetical-identity} into a family of scalar identities
indexed by \(V=\F_2^4\). For
\(x=(x_1,x_2,x_3,x_4)\in V\), define
\[
e_x
=
\begin{pmatrix}
1 ,&
(-1)^{x_1} ,&
(-1)^{x_2}, &
(-1)^{x_3} ,&
(-1)^{x_4} ,&
(-1)^{x_1+x_2+x_3+x_4}
\end{pmatrix}^{\mathsf T}.
\]
In particular,
\(
e_{\mathbf0}
=
(1,1,1,1,1,1)^{\mathsf T},
\)
and hence
\(
J_6=e_{\mathbf0}e_{\mathbf0}^{\mathsf T}.
\)
For \(x\in V\), let \(|x|\) denote its Hamming weight, that is, the
number of coordinates of \(x\) equal to \(1\). A direct
calculation gives
\begin{equation}
e_x^{\mathsf T}e_{\mathbf0}
=
5-2|x|+(-1)^{|x|}
=
\begin{cases}
6,&|x|=0,\\
2,&|x|=1\text{ or }2,\\
-2,&|x|=3\text{ or }4.
\end{cases}
\label{eq:even-even-inner-product}
\end{equation}
In particular, \(e_x^{\mathsf T}e_{\mathbf0}\neq0\) for every \(x\in V\).
We now normalize the inner products \(e_x^{\mathsf T}o_a\). The
normalization is chosen precisely so that
\eqref{eq:hypothetical-identity} becomes an identity with constant
left-hand side. For \(a,x\in V\), define
\begin{equation}
f_a(x)
=
\frac{e_x^{\mathsf T}o_a}
     {2e_x^{\mathsf T}e_{\mathbf0}}.
\label{eq:normalized-function}
\end{equation}

\begin{lemma}
\label{lem:functional-reformulation}
For every \(x,y\in V\), with \(\lambda_t=4c_t\neq0\),
\begin{equation}
\label{eq:normalized-identity}
1=\sum_{t=1}^{k}\lambda_t f_{a_t}(x)f_{b_t}(y).
\end{equation}
\end{lemma}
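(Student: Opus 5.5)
Multiply the matrix identity \eqref{eq:hypothetical-identity} on the left by \(e_x^{\mathsf T}\) and on the right by \(e_y\); this turns a \(6\times6\) matrix identity into a scalar identity for each pair \((x,y)\in V\times V\). Since \(J_6=e_{\mathbf0}e_{\mathbf0}^{\mathsf T}\), the left-hand side becomes
\[
e_x^{\mathsf T}J_6e_y=(e_x^{\mathsf T}e_{\mathbf0})(e_{\mathbf0}^{\mathsf T}e_y).
\]
The \(t\)-th term on the right-hand side becomes
\[
c_t\,(e_x^{\mathsf T}o_{a_t})(o_{b_t}^{\mathsf T}e_y)=c_t\,(e_x^{\mathsf T}o_{a_t})(e_y^{\mathsf T}o_{b_t}),
\]
using that an inner product of real vectors is symmetric.

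By \eqref{eq:even-even-inner-product}, both \(e_x^{\mathsf T}e_{\mathbf0}\) and \(e_y^{\mathsf T}e_{\mathbf0}\) are nonzero for every \(x,y\in V\). We may therefore divide the scalar identity by the product \((e_x^{\mathsf T}e_{\mathbf0})(e_y^{\mathsf T}e_{\mathbf0})\). Writing each quotient using \eqref{eq:normalized-function}, we have
\[
\frac{e_x^{\mathsf T}o_{a_t}}{e_x^{\mathsf T}e_{\mathbf0}}=2f_{a_t}(x),
\qquad
\frac{e_y^{\mathsf T}o_{b_t}}{e_y^{\mathsf T}e_{\mathbf0}}=2f_{b_t}(y).
\]
Substituting gives
\[
1=\sum_{t=1}^{k}4c_t\,f_{a_t}(x)f_{b_t}(y),
\]
which is \eqref{eq:normalized-identity} with \(\lambda_t=4c_t\). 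Finally, \(\lambda_t\neq0\) because the coefficients \(c_t\) were taken to be nonzero after combining repeated pairs.

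There is no real obstacle. The only points that need care are the factor \(4=2\cdot2\) coming from the normalization, and the fact that the denominators \(e_x^{\mathsf T}e_{\mathbf0}\) never vanish, which is exactly what \eqref{eq:even-even-inner-product} provides.
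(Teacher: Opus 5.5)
Your proposal is correct and follows essentially the same route as the paper: sandwich the matrix identity between \(e_x^{\mathsf T}\) and \(e_y\), use \(J_6=e_{\mathbf0}e_{\mathbf0}^{\mathsf T}\), and divide by the never-vanishing factors \((e_x^{\mathsf T}e_{\mathbf0})(e_y^{\mathsf T}e_{\mathbf0})\) to obtain the factor \(4c_t\). Your remark that \(\lambda_t\neq0\) follows from the nonzero \(c_t\) after combining repeated pairs is also the paper's justification.
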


\begin{proof}
Multiply \eqref{eq:hypothetical-identity} on the left by
\(e_x^{\mathsf T}\) and on the right by \(e_y\). Since
\(J_6=e_{\mathbf0}e_{\mathbf0}^{\mathsf T}\), the left-hand side is
\[
e_x^{\mathsf T}J_6e_y
=
e_x^{\mathsf T}
\bigl(e_{\mathbf0}e_{\mathbf0}^{\mathsf T}\bigr)e_y
=
\bigl(e_x^{\mathsf T}e_{\mathbf0}\bigr)
\bigl(e_y^{\mathsf T}e_{\mathbf0}\bigr).
\]
On the other hand, by \eqref{eq:normalized-function},
\[
e_x^{\mathsf T}o_{a_t}
=
2(e_x^{\mathsf T}e_{\mathbf0})f_{a_t}(x)
\text{ and }
e_y^{\mathsf T}o_{b_t}
=
2(e_y^{\mathsf T}e_{\mathbf0})f_{b_t}(y).
\]
Therefore
\[
(e_x^{\mathsf T}e_{\mathbf0})
(e_y^{\mathsf T}e_{\mathbf0})
=
4(e_x^{\mathsf T}e_{\mathbf0})
(e_y^{\mathsf T}e_{\mathbf0})
\sum_{t=1}^{k}
c_t f_{a_t}(x)f_{b_t}(y).
\]
By \eqref{eq:even-even-inner-product}, both common factors are nonzero.
Dividing by them gives
\[
1
=
\sum_{t=1}^{k}
4c_t f_{a_t}(x)f_{b_t}(y).
\]
\end{proof}

\subsection{The block system}

Lemma~\ref{lem:functional-reformulation} replaces the original matrix
decomposition by functions on the sixteen-point space \(V\). We next
determine exactly where these functions are nonzero and what values they
take there.
Write \(\mathbf0=(0,0,0,0)\) and \(\mathbf1=(1,1,1,1)\)
and let
\(\varepsilon_1,\varepsilon_2,\varepsilon_3,\varepsilon_4\) be the standard basis vectors of
\(V\). Define
\(
D
=
\{\mathbf0,\varepsilon_1,\varepsilon_2,
\varepsilon_3,\varepsilon_4,\mathbf1\},
\)
and, for \(a\in V\), let
\(
B_a=a+D.
\)
We call each set \(B_a\) a \emph{block}. For a function \(f:V\to\mathbb R\), write
\[
\operatorname{supp}(f)
=
\{x\in V:f(x)\neq0\}.
\] 
The next lemma describes the sets \(B_a\) arising as supports of the functions \(f_a\).

\begin{lemma}
\label{lem:block-system}
The following statements hold.
\begin{enumerate}
\item[(i)] For every \(a\in V\), \(\operatorname{supp}(f_a)=B_a\).
\item[(ii)] For every \(a,x\in V\) with \(x\in B_a\),
\(
|f_a(x)|
=
\begin{cases}
\dfrac13,&x=\mathbf0,\\
1,&x\neq\mathbf0.
\end{cases}
\)
\item[(iii)] Every block has size six, that is, \(|B_a|=6\).
\item[(iv)] Any two distinct blocks meet in exactly two points,
\(|B_a\cap B_b|=2\) for \(a\neq b\).
\item[(v)] Every point belongs to exactly six blocks,
\(\bigl|\{a\in V:x\in B_a\}\bigr|=6\) for every \(x\in V\).
\end{enumerate}
\end{lemma}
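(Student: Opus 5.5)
We compute \(e_x^{\mathsf T}o_a\) explicitly. Writing \(y=x+a\in V\), the first five coordinates of \(e_x\circ o_a\) are \(1,(-1)^{y_1},\dots,(-1)^{y_4}\). The last coordinate is \(-(-1)^{|x|}(-1)^{|a|}=-(-1)^{|y|}\). Hence
\[
e_x^{\mathsf T}o_a=1+\sum_{i=1}^4(-1)^{y_i}-(-1)^{|y|}=5-2|y|-(-1)^{|y|},
\]
which depends only on the weight of \(y=x+a\). Tabulating this by weight \(|y|=0,1,2,3,4\) gives the values \(4,4,0,0,-4\).

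So \(e_x^{\mathsf T}o_a\neq0\) exactly when \(|x+a|\in\{0,1,4\}\). This says \(x+a\in D\), equivalently \(x\in a+D=B_a\), which proves (i).

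For (ii), on the support we have \(|e_x^{\mathsf T}o_a|=4\). Dividing by \(2|e_x^{\mathsf T}e_{\mathbf0}|\) and using \eqref{eq:even-even-inner-product} gives \(|f_a(x)|=4/12=1/3\) when \(x=\mathbf0\). When \(x\neq\mathbf0\) the denominator is \(2\cdot2=4\), so \(|f_a(x)|=1\).

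Items (iii) and (v) are counting statements. Translation is a bijection, so \(|B_a|=|D|=6\). For (v), \(x\in B_a\) iff \(a\in x+D\) (since \(-D=D\) in characteristic 2), and there are \(6\) such \(a\).

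For (iv), \(|B_a\cap B_b|=|D\cap(D+c)|\) with \(c=a+b\neq\mathbf0\). This count equals the number of ordered pairs \((d,d')\in D^2\) with \(d+d'=c\). I would list the sums of two distinct elements of \(D\):
\begin{itemize}
\item pairs \(\{\mathbf0,\varepsilon_i\}\) give \(\varepsilon_i\);
\item pairs \(\{\varepsilon_i,\mathbf1\}\) give weight-3 vectors;
\item pairs \(\{\varepsilon_i,\varepsilon_j\}\) give weight-2 vectors;
\item the pair \(\{\mathbf0,\mathbf1\}\) gives \(\mathbf1\).
\end{itemize}
This covers \(4+4+6+1=15\) unordered pairs. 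Each of the 15 nonzero vectors arises exactly once, so each nonzero \(c\) has exactly two ordered representations. Hence the intersection size is \(2\).

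The only real obstacle is the sign bookkeeping in the last coordinate of \(e_x\circ o_a\); once that is done, everything reduces to this difference-set count. The count shows that \(D\) is a \((16,6,2)\) difference set.
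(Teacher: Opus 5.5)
Your proposal is correct and follows the paper's proof essentially step by step: the same computation \(e_x^{\mathsf T}o_a=5-2|x+a|-(-1)^{|x+a|}\) with values \(4,4,0,0,-4\), the same deduction of (i) and (ii), translation for (iii) and (v), and the same count of ordered representations \(c=d+d'\) for (iv). Your list of the fifteen sums of distinct pairs from \(D\) just repackages the paper's table by weight, i.e.\ the observation that \(D\) is a \((16,6,2)\) difference set.
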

The proof of Lemma~\ref{lem:block-system} is given in
Appendix~\ref{app:block-system}.

\subsection{The rectangle cover}
\label{subsec:rectangle-cover}

We now use the supports of the functions \(f_a\). For \(a,b\in V\),
we call a set of the form \(B_a\times B_b\subseteq V\times V\) a
\emph{rectangle}. By Lemma~\ref{lem:functional-reformulation} and
Lemma~\ref{lem:block-system}, the selected rectangles cover
\(V\times V\).

\begin{lemma}
\label{lem:rectangle-cover}
The rectangles \(B_{a_t}\times B_{b_t}\), \(t\in[k]\), cover \(V\times V\), that is,
\[
V\times V
=
\bigcup_{t=1}^{k}
(B_{a_t}\times B_{b_t}).
\]
\end{lemma}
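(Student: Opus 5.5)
The argument is a direct support argument, combining Lemma~\ref{lem:functional-reformulation} with part~(i) of Lemma~\ref{lem:block-system}. Fix an arbitrary pair \((x,y)\in V\times V\). By Lemma~\ref{lem:functional-reformulation}, identity~\eqref{eq:normalized-identity} holds at \((x,y)\):
\[
1=\sum_{t=1}^{k}\lambda_t f_{a_t}(x)f_{b_t}(y).
\]
The left-hand side is nonzero, so at least one summand is nonzero. Choose an index \(t\) with \(\lambda_t f_{a_t}(x)f_{b_t}(y)\neq0\).

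For this \(t\), each factor must be nonzero, so \(f_{a_t}(x)\neq0\) and \(f_{b_t}(y)\neq0\). That is, \(x\in\operatorname{supp}(f_{a_t})\) and \(y\in\operatorname{supp}(f_{b_t})\). By Lemma~\ref{lem:block-system}(i), these supports are exactly \(B_{a_t}\) and \(B_{b_t}\). Hence
\[
(x,y)\in B_{a_t}\times B_{b_t}.
\]
Since \((x,y)\) was arbitrary, \(V\times V\subseteq\bigcup_{t}(B_{a_t}\times B_{b_t})\). The reverse inclusion is trivial because each block is a subset of \(V\).

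There is no real obstacle here. The only substantive input is the support computation in Lemma~\ref{lem:block-system}(i), which is taken as given. The nonvanishing of \(\lambda_t\) is not even needed, since a nonzero product already forces both \(f\)-factors to be nonzero.
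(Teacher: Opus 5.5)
Your proof is correct and follows the paper's argument essentially line for line: evaluate the normalized identity at \((x,y)\), pick a nonzero summand, and apply \(\operatorname{supp}(f_a)=B_a\) from Lemma~\ref{lem:block-system}(i). Your two extra remarks, that the reverse inclusion is trivial and that \(\lambda_t\neq0\) is not needed, are both accurate.
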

\begin{proof}
Fix \((x,y)\in V\times V\). By
Lemma~\ref{lem:functional-reformulation},
\[
1
=
\sum_{t=1}^{k}
\lambda_t f_{a_t}(x)f_{b_t}(y).
\]
Since the left-hand side is nonzero, at least one term on the
right-hand side is nonzero. By
Lemma~\ref{lem:block-system}\textup{(i)}, this means that, for some
\(t\in[k]\),
\(
x\in B_{a_t}
\text{ and }
y\in B_{b_t}.
\) 
Hence
\(
(x,y)\in B_{a_t}\times B_{b_t}.
\)
Since \((x,y)\) was arbitrary, the rectangles cover \(V\times V\).
\end{proof}

For \(x,y\in V\), define the left and right degrees
\[
d_L(x)
=
\bigl|\{t\in[k]:x\in B_{a_t}\}\bigr|,
\qquad
d_R(y)
=
\bigl|\{t\in[k]:y\in B_{b_t}\}\bigr|.
\]
Thus \(d_L(x)\) is the number of selected left blocks containing \(x\),
while \(d_R(y)\) is the number of selected right blocks containing
\(y\).

\subsection{Forcing twelve terms}

We now use the rectangle cover to show that any representation with at
most twelve terms must in fact use exactly twelve terms. Moreover,
there is an unused block \(A\) such that every point of \(A\) has left
degree four.

\begin{lemma}
\label{lem:tight-unused-block}
Under the assumption \(k\le12\), the following statements hold.
\begin{enumerate}
\item[(i)] Every point has left and right degree at least four, that is, 
\(d_L(x)\ge4\) and \(d_R(x)\ge4\) for every \(x\in V\).

\item[(ii)] We have \(k=12\).

\item[(iii)] There exists a block \(A\) that does not occur among
\(B_{a_1},\ldots,B_{a_{12}}\) and satisfies \(d_L(z)=4\) for every
\(z\in A\).
\end{enumerate}
\end{lemma}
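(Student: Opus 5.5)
The plan is to get (i) from the covering property, and then to get (ii) and (iii) together from one counting identity over a block that no left term uses.

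\emph{Step 1 (part (i)).} Fix \(x\in V\) and look at the row \(\{x\}\times V\). By Lemma~\ref{lem:rectangle-cover}, every \((x,y)\) lies in some rectangle \(B_{a_t}\times B_{b_t}\), and then necessarily \(x\in B_{a_t}\). Hence the right blocks \(B_{b_t}\), taken over the \(d_L(x)\) indices \(t\) with \(x\in B_{a_t}\), cover all of \(V\), which has \(16\) points. These blocks need not be distinct, but it suffices to show that three or fewer blocks cannot cover \(V\).
\begin{itemize}
\item One or two blocks cover at most \(6\) or \(6+6-2=10\) points, by Lemma~\ref{lem:block-system}(iii),(iv).
\item For three distinct blocks, inclusion--exclusion gives at most \(18-3\cdot 2+|B\cap B'\cap B''|\le 18-6+2=14<16\), since any triple intersection lies inside a pairwise intersection of size \(2\).
\end{itemize}
So \(d_L(x)\ge 4\). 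The same argument applied to the column \(V\times\{y\}\) gives \(d_R(y)\ge4\).

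\emph{Step 2 (an unused block).} The map \(a\mapsto B_a\) is injective, because by (iv) distinct \(a\) give blocks meeting in only \(2\) of their \(6\) points. So there are exactly \(16\) blocks. The left blocks \(B_{a_1},\dots,B_{a_k}\) are at most \(k\le 12<16\) distinct blocks. Hence some block \(A\) is different from every \(B_{a_t}\).

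\emph{Step 3 (parts (ii) and (iii)).} Double count incidences between \(A\) and the selected left blocks:
\[
\sum_{z\in A} d_L(z)=\sum_{t=1}^{k}|A\cap B_{a_t}|=2k.
\]
This uses Lemma~\ref{lem:block-system}(iv): since \(A\neq B_{a_t}\) for every \(t\), each intersection has size exactly \(2\), even when several \(t\) share the same left block. On the other hand, \(|A|=6\) and each \(d_L(z)\ge4\) by Step 1, so the sum is at least \(24\). Therefore \(2k\ge 24\), and with \(k\le12\) this forces \(k=12\), proving (ii). Equality then holds throughout, so \(d_L(z)=4\) for every \(z\in A\), proving (iii).

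The only step I expect to need care is Step 1. The bound on the union of three blocks must be valid even when some of the \(B_{b_t}\) coincide; reducing to distinct blocks first handles this.
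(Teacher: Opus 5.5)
Your proof is correct, and Steps 1 and 3 are the paper's argument. Step 1 shows that three blocks cover at most \(14\) points. Step 3 double counts incidences between an unused block \(A\) and the selected left blocks, giving \(2k=\sum_{z\in A}d_L(z)\ge 24\), with equality forcing \(d_L(z)=4\) for every \(z\in A\).

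The only difference is how you find \(A\).
\begin{itemize}
\item The paper uses \(\sum_{x}d_L(x)=6k\le 72<80\) to locate a point \(x\) with \(d_L(x)=4\). Since exactly six blocks pass through \(x\) by Lemma~\ref{lem:block-system}(v), one of them must be unused.
\item You note that \(a\mapsto B_a\) is injective by Lemma~\ref{lem:block-system}(iv). So there are \(16\) distinct blocks, while at most \(k\le 12\) occur among the left blocks.
\end{itemize}

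Your route is shorter and needs neither part (v) nor the averaging step. The paper's route also shows that \(A\) passes through a prescribed point of left degree four. That extra information is not needed, because the equality case of the double count already forces every point of \(A\) to have left degree four.
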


\begin{proof}
We first note that three blocks cannot cover \(V\). If
\(B_a,B_b,B_c\) are distinct, then by Lemma~\ref{lem:block-system},
\[
|B_a\cup B_b\cup B_c|
=
18-2-2-2+|B_a\cap B_b\cap B_c|
\le14.
\]
If some of the blocks coincide, their union is even smaller.

\emph{\textup{(i)}.}
Fix \(x\in V\). By the rectangle cover, the blocks \(B_{b_t}\) with
\(x\in B_{a_t}\) cover \(V\). Since three blocks cannot cover \(V\),
at least four such blocks are needed. Hence \(d_L(x)\ge4\).
Interchanging the left and right sides gives \(d_R(x)\ge4\) for every
\(x\in V\).

\smallskip
\emph{\textup{(ii)}.}
We first count incidences between the sixteen points of \(V\) and the
selected left blocks \(B_{a_1},\ldots,B_{a_k}\). Each selected left
block contains six points, while a point \(x\in V\) is contained in
exactly \(d_L(x)\) selected left blocks. Therefore
\begin{equation}
\sum_{x\in V} d_L(x)
=
\sum_{t=1}^{k}|B_{a_t}|
=
6k
\le72.
\label{eq:left-degree-sum}
\end{equation}
By part \textup{(i)}, every point has left degree at least four. If
every point had left degree at least five, then
\[
\sum_{x\in V}d_L(x)\ge16\cdot5=80,
\]
contradicting \eqref{eq:left-degree-sum}. Hence there exists a point
\(x\in V\) with \(d_L(x)=4\).
Now consider the six blocks containing \(x\). By
Lemma~\ref{lem:block-system}\textup{(v)}, there are exactly six such
block types. Since \(d_L(x)=4\), only four selected left block
occurrences contain \(x\). Hence at least one of the six blocks
containing \(x\) does not occur among the selected left blocks
\(B_{a_1},\ldots,B_{a_k}\). Fix such a block and call it \(A\).
Thus \(A\neq B_{a_t}\) for every \(t\in[k]\). By Lemma~\ref{lem:block-system}\textup{(iv)}, every selected left block
\(B_{a_t}\) meets \(A\) in exactly two points, so
\(|A\cap B_{a_t}|=2\) for every \(t\in[k]\).

We now count incidences between the six points of \(A\) and the selected
left blocks \(B_{a_1},\ldots,B_{a_k}\). Counting by points gives
\(\sum_{z\in A}d_L(z)\), while each selected left block contributes
exactly two incidences. Hence
\begin{equation}
\sum_{z\in A}d_L(z)
=
\sum_{t=1}^{k}|A\cap B_{a_t}|
=
2k.
\label{eq:unused-block-count}
\end{equation}
Every point of \(A\) has left degree at least four, and \(A\) contains
six points. Therefore
\[
2k
=
\sum_{z\in A}d_L(z)
\ge
6\cdot4
=
24.
\]
Thus \(k\ge12\). Since our assumption is \(k\le12\), we
conclude that \(k=12\).

\smallskip
\emph{\textup{(iii)}.}
Now \eqref{eq:unused-block-count} gives
\(\sum_{z\in A}d_L(z)=24\). Since the six terms are all at least four,
each must equal four. Thus \(d_L(z)=4\) for every \(z\in A\), and by
construction \(A\) does not occur among the selected left blocks.
\end{proof}

\subsection{The degree-four case}
\label{subsec:degree-four-case}

The previous lemma gives an unused block \(A\) whose points all have
left degree four. We now examine what happens at any point \(z\) with
\(d_L(z)=4\). The four selected left blocks containing \(z\) correspond
to four right blocks that cover \(V\). Using the actual values of the
functions \(f_a\), we show that exactly three of these right blocks
contain \(\mathbf0\).

\begin{lemma}
\label{lem:three-out-of-four}
If \(d_L(z)=4\), then among the four indices \(t\in[k]\) satisfying
\(z\in B_{a_t}\), exactly three satisfy \(\mathbf0\in B_{b_t}\).
\end{lemma}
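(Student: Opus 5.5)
The plan is to fix \(z\) and freeze the left factor, which turns the two-variable identity into a single vector identity in \(\mathbb R^6\). Let \(t_1,\dots,t_4\) be the four indices with \(z\in B_{a_t}\). Put \(x=z\) in Lemma~\ref{lem:functional-reformulation}. By Lemma~\ref{lem:block-system}\textup{(i)} the other terms vanish, so
\[
1=\sum_{i=1}^4\mu_i f_{b_{t_i}}(y)\quad\text{for all }y\in V,\qquad \mu_i=\lambda_{t_i}f_{a_{t_i}}(z)\neq0 .
\]
Using \eqref{eq:normalized-function}, this says \(e_y^{\mathsf T}\bigl(\sum_i\mu_i o_{b_{t_i}}-2e_{\mathbf0}\bigr)=0\) for all \(y\). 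The sixteen vectors \(e_y\) span \(\mathbb R^6\) (they contain \(e_{\mathbf0}\), the four \(e_{\varepsilon_j}\) and \(e_{\mathbf1}\)). Hence
\[
\sum_{i=1}^4\mu_i\,o_{b_{t_i}}=2e_{\mathbf0}=2(1,1,1,1,1,1)^{\mathsf T}. \tag{$\ast$}
\]
Since the pairs \((a_t,b_t)\) are distinct but \(a_t\) may repeat, the \(b_{t_i}\) need not be distinct a priori. I will first merge equal \(b\)'s. This can only shrink the family, and a family of at most three blocks cannot cover \(V\), so the \(b_{t_i}\) must in fact be distinct. The covering of \(V\) by the four right blocks is Lemma~\ref{lem:tight-unused-block}\textup{(i)}'s argument applied at \(z\).

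Next, translate the conclusion. Since \(D=-D\), we have \(\mathbf0\in B_b=b+D\) iff \(b\in D\). So the claim is that exactly three of the four distinct \(b_{t_i}\) lie in \(D\). Let \(m\) be this number. Evaluating at \(y=\mathbf0\) and using Lemma~\ref{lem:block-system}\textup{(ii)} gives \(1=\sum_{b_{t_i}\in D}\pm\mu_i/3\), so \(m\ge1\). I then rule out \(m=1,2,4\) using \((\ast)\) together with the exact values in Lemma~\ref{lem:block-system}\textup{(ii)}.

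The main tool for the exclusions is the covering multiplicities. Four blocks of size six with pairwise intersections of size two cover \(V\) (Lemma~\ref{lem:block-system}\textup{(iv)}). Inclusion--exclusion then gives \(24-12+T_3-T_4=16\), where \(T_3,T_4\) count triple and quadruple intersections. So triple intersections must be nonempty, and the multiplicity pattern is very restricted. At a point \(y\ne\mathbf0\) covered by a single block \(B_{b_{t_i}}\), we get \(|\mu_i|=1\). At points covered twice, \(\pm\mu_i\pm\mu_j=1\). Combining these with the value at \(\mathbf0\):
\begin{itemize}
\item[(a)] If \(m=1\), then \(|\mu_i|=3\) for the unique block through \(\mathbf0\). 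That block has a point covered only by it, so \(|\mu_i|=1\), a contradiction.
\item[(b)] If \(m=2\) or \(m=4\), I expect a sign and parity contradiction coordinatewise in \((\ast)\). Each \(o_b\) has coordinate product \(-1\), while \(e_{\mathbf0}\) has product \(+1\). With \(b\in D\) the vectors \(o_b\) are explicit: \(o_{\mathbf0}\), \(o_{\mathbf1}\), and \(o_{\varepsilon_j}\), which is \(o_{\mathbf0}\) with coordinates \(j+1\) and \(6\) flipped. So \((\ast)\) can be checked in each case.
\end{itemize}

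The hard step is (b), making the exclusion of \(m=2\) and \(m=4\) clean rather than a brute-force enumeration. My first attempt is to use that the \(\mu_i\) take values in \(\{\pm1,\pm3\}\)-type constraints forced at singly-covered points, plus the sixth-coordinate equation of \((\ast)\). Every \(o_b\) has last coordinate \(-(-1)^{|b|}\), and the first coordinate gives \(\sum\mu_i=2\). Since \(|b|\) is \(0\) or \(1\) for most \(b\in D\), these two linear equations should pin down the count \(m\). If that fails, I will fall back to a finite check over the \(\binom{16}{4}\) quadruples \(b\) (up to the symmetry of \(V\) permuting coordinates) satisfying the covering condition, solving \((\ast)\) for each.
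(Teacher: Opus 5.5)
Your setup is sound: freezing \(x=z\), showing the four right blocks are distinct, the translation \(\mathbf0\in B_b\iff b\in D\), and the bound \(m\ge1\) are all correct, and so is \((\ast)\). The gap is part (b): you never actually exclude \(m=2\) and \(m=4\). You only say you ``expect'' a sign or parity contradiction. The two linear equations you plan to use, the first and sixth coordinates of \((\ast)\), cannot pin down \(m\) on their own. They constrain four real unknowns \(\mu_i\) whose magnitudes you have not controlled, and the fallback enumeration is not carried out. As written, the core of the lemma is missing.

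The missing idea is already half-present in your case (a). Any three blocks cover at most \(14\) points, since \(18-6+|B_a\cap B_b\cap B_c|\le14\). So each of the four right blocks \(B_{b_{t_i}}\) has at least two points lying in none of the other three, and hence a private point \(v_i\neq\mathbf0\). Evaluating the frozen identity at \(y=v_i\) leaves one term, \(1=\mu_i f_{b_{t_i}}(v_i)\). By Lemma~\ref{lem:block-system}(ii) this gives \(|\mu_i|=1\) for \emph{every} \(i\), not just for the block through \(\mathbf0\).

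Evaluating at \(y=\mathbf0\) then gives \(1\) as a sum of \(m\) terms \(\pm\tfrac13\), that is, \(3=\pm1\pm\cdots\pm1\) with \(m\) summands. This is impossible for \(m\le2\), where the absolute value is at most \(2\), and for \(m=4\), where the sum is even. So \(m=3\). This is exactly the paper's argument, and it makes \((\ast)\), the inclusion--exclusion multiplicities, and any case analysis unnecessary. The same observation also fills a small hole in your case (a): there you need the private point of the block through \(\mathbf0\) to be nonzero, which you did not justify.
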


\begin{proof}
Let
\(
T_z=\{t\in[k]:z\in B_{a_t}\}.
\) 
Then \(|T_z|=4\). By the rectangle cover, the corresponding right blocks
cover \(V\), so
\[
V=\bigcup_{t\in T_z}B_{b_t}.
\]
These four blocks are distinct, since otherwise at most three blocks
would cover \(V\), which is impossible by the argument used in the
proof of Lemma~\ref{lem:tight-unused-block}.
Fix \(t\in T_z\). The other three blocks cover at most fourteen points,
whereas all four together cover all sixteen points of \(V\). Hence
\(B_{b_t}\) contains at least two points belonging to none of the other
three blocks. We call such points \emph{private points} of \(B_{b_t}\).

Now set \(x=z\) in \eqref{eq:normalized-identity}. Since
\(f_{a_t}(z)=0\) for \(t\notin T_z\), we obtain
\begin{equation}
1=\sum_{t\in T_z}\alpha_t f_{b_t}(y),
\label{eq:local-identity}
\end{equation}
where \(\alpha_t=\lambda_t f_{a_t}(z)\neq0\).
For each \(t\in T_z\), choose a private point
\(v_t\in B_{b_t}\) with \(v_t\neq\mathbf0\). This is possible because
each \(B_{b_t}\) has at least two private points. Evaluating
\eqref{eq:local-identity} at \(y=v_t\), only the \(t\)-th term survives,
so
\[
1=\alpha_t f_{b_t}(v_t).
\]
Since \(v_t\neq\mathbf0\), Lemma~\ref{lem:block-system}\textup{(ii)}
gives \(|f_{b_t}(v_t)|=1\). Hence
\begin{equation}
|\alpha_t|=1
\text{ for every }t\in T_z.
\label{eq:alpha-unit}
\end{equation}
Let
\(
r=\bigl|\{t\in T_z:\mathbf0\in B_{b_t}\}\bigr|.
\)
Evaluate \eqref{eq:local-identity} at \(y=\mathbf0\). Exactly \(r\)
terms are nonzero, and by Lemma~\ref{lem:block-system}\textup{(ii)} and
\eqref{eq:alpha-unit}, each surviving term is either \(1/3\) or
\(-1/3\). Therefore
\[
1=
\underbrace{\pm\frac13\pm\cdots\pm\frac13}_{r\text{ terms}}
\;\Longrightarrow\;
3=
\underbrace{\pm1\pm\cdots\pm1}_{r\text{ terms}}.
\]
If \(r\le2\), the right-hand side has absolute value at most \(2\), while
if \(r=4\), it is even. Hence \(r=3\).
Thus exactly three of the four corresponding right blocks contain
\(\mathbf0\).
\end{proof}

\subsection{The final contradiction}
\label{subsec:handshake-contradiction}

We now combine the structure forced by a twelve-term representation with
the three-out-of-four rule to obtain the final contradiction.

\begin{theorem}[Lower bound]
\label{thm:lower-bound}
\(
\pf(K_{3,3}\sqcup K_{3,3})\ge13.
\)
\end{theorem}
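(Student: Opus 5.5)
The plan is to assume \(k\le12\), take the configuration that Lemma~\ref{lem:tight-unused-block} forces, and reach a contradiction by double counting incidences on the unused block \(A\), using the three-out-of-four rule of Lemma~\ref{lem:three-out-of-four}. I expect the double count to give only a consistency condition, so the real contradiction has to be extracted afterwards. By Lemma~\ref{lem:tight-unused-block}, we have \(k=12\), and there is a block \(A\) that is never a selected left block and whose six points all satisfy \(d_L(z)=4\). Let
\[
T_0=\{t\in[12]:\mathbf0\in B_{b_t}\},\qquad T_1=[12]\setminus T_0 ,
\]
so that \(|T_0|=d_R(\mathbf0)\).

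First I count the pairs \((z,t)\) with \(z\in A\), \(z\in B_{a_t}\) and \(t\in T_0\). Counting by points, Lemma~\ref{lem:three-out-of-four} gives exactly three such \(t\) for each \(z\in A\), for a total of \(18\). Counting by indices, Lemma~\ref{lem:block-system}(iv) gives \(|A\cap B_{a_t}|=2\) for every \(t\), for a total of \(2|T_0|\). Hence \(d_R(\mathbf0)=|T_0|=9\) and \(|T_1|=3\). Exchanging the roles of the two factors (transpose the identity and repeat Lemmas~\ref{lem:tight-unused-block} and~\ref{lem:three-out-of-four}) gives \(d_L(\mathbf0)=9\) in the same way.

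Next I play these counts against the degree sums. We have \(\sum_x d_L(x)=\sum_x d_R(x)=72\), and every degree is at least \(4\), so the total excess over the minimum is \(72-64=8\) on each side. Since \(d_L(\mathbf0)=9\) already has excess \(5\), the other fifteen points share an excess of only \(3\). Therefore at least twelve points \(z\neq\mathbf0\) have \(d_L(z)=4\), and Lemma~\ref{lem:three-out-of-four} applies at each of them. I then double count the pairs \((z,t)\) with \(z\in B_{a_t}\) and \(t\in T_1\) over the whole of \(V\); this equals \(6|T_1|=18\). On the other hand, every point of left degree four contributes exactly one. The points of larger degree, including \(\mathbf0\), are controlled by the total excess of \(3\) on \(V\setminus\{\mathbf0\}\) and by the fact that the three blocks \(B_{b_t}\) with \(t\in T_1\) avoid \(\mathbf0\).

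The step I expect to be the main obstacle is turning these counts into a genuine contradiction, since the counts alone may be consistent. My plan there is to bring in parity and the values of the \(f_a\). At a degree-four point \(z\), the proof of Lemma~\ref{lem:three-out-of-four} gives \(|\alpha_t|=1\) and forces all three surviving terms at \(y=\mathbf0\) to equal \(+\tfrac13\). Evaluating \eqref{eq:normalized-identity} at \((\mathbf0,\mathbf0)\) gives \(1=\sum_t\lambda_t f_{a_t}(\mathbf0)f_{b_t}(\mathbf0)\), with nine surviving terms, each equal to \(\pm\lambda_t/9\). Evaluating at \((z,\mathbf0)\) and at \((\mathbf0,w)\) for degree-four points \(z\) and \(w\) determines the values \(\lambda_t f_{a_t}(z)\) up to unit signs. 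I would try to show that these constraints force a sum of an odd number of equal-magnitude terms to equal an even quantity, or else a handshake-type parity violation in the incidence counts above (the sum over \(A\), or over all degree-four points, of an odd number per point being forced even). If the parity route fails, the fallback is a finite case check: up to the affine symmetries of \(V\) that preserve the block system, enumerate the choices of the three \(T_1\) blocks together with the left blocks meeting \(A\), and show that none satisfies \eqref{eq:normalized-identity}.
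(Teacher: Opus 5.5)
\textbf{There is a genuine gap: the proposal never reaches a contradiction.} Your first step matches the paper. The double count of pairs \((z,t)\in A\times T_0\) with \(z\in B_{a_t}\), using Lemma~\ref{lem:three-out-of-four} and Lemma~\ref{lem:block-system}(iv), correctly gives \(d_R(\mathbf0)=9\). After that, you only list routes you \emph{would try}: parity of \(\pm\tfrac13\) sums, a handshake argument, or a symmetry-reduced case check. None of them is carried out.

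Your \(T_1\) count over \(V\) does not close by itself either. With \(m\in\{12,13,14\}\) nonzero points of left degree four, the remaining \(18-m\le 6\) incidences can be absorbed, at least numerically, by \(\mathbf0\) (at most \(|T_1|=3\)) and by the at most three points of larger degree.

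Your expectation that the counts alone may be consistent is wrong: \(d_R(\mathbf0)=9\) is already combinatorially impossible. The missing idea is a pigeonhole argument on the six block types through \(\mathbf0\) (Lemma~\ref{lem:block-system}(v)).
\begin{itemize}
\item Only nine selected right blocks contain \(\mathbf0\), so some block \(C\ni\mathbf0\) occurs at most once among \(B_{b_1},\dots,B_{b_{12}}\).
\item Counting incidences between \(C\) and the twelve right blocks, via Lemma~\ref{lem:block-system}(iii) and (iv), gives \(\sum_{y\in C}d_R(y)\le 6+11\cdot2=28\).
\item But \(d_R(\mathbf0)=9\), and \(d_R(y)\ge4\) for the other five points of \(C\) by Lemma~\ref{lem:tight-unused-block}(i). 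Hence \(\sum_{y\in C}d_R(y)\ge 29\), a contradiction.
\end{itemize}
This is exactly how the paper finishes. The same argument also works on the left side using your \(d_L(\mathbf0)=9\), so you were one step away.

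A smaller error concerns your parity route. At \((\mathbf0,\mathbf0)\), the number of surviving terms in \eqref{eq:normalized-identity} is \(\bigl|\{t:\mathbf0\in B_{a_t}\}\cap\{t:\mathbf0\in B_{b_t}\}\bigr|\). This is only known to lie between \(9+9-12=6\) and \(9\); it is not forced to equal nine. So the odd-number-of-terms parity argument you sketch rests on an unjustified count.
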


\begin{proof}
Under the contradiction assumption, Lemma~\ref{lem:tight-unused-block}
gives \(k=12\) and an unused block \(A\) such that \(d_L(z)=4\) for
every \(z\in A\).
Let
\[
H=\{t\in[12]:\mathbf0\in B_{b_t}\}.
\]
Thus \(|H|=d_R(\mathbf0)\). We first determine this number.
Consider
\[
\mathcal P=\{(z,t)\in A\times H:z\in B_{a_t}\}.
\]
We count \(|\mathcal P|\) in two ways. For each \(z\in A\), exactly four
selected left blocks contain \(z\), and by
Lemma~\ref{lem:three-out-of-four}, exactly three of the corresponding
right blocks contain \(\mathbf0\). Since \(|A|=6\),
\(
|\mathcal P|=6\cdot3=18.
\)
On the other hand, \(A\) is unused, so \(A\neq B_{a_t}\) for every
\(t\in H\). By Lemma~\ref{lem:block-system}\textup{(iv)},
\(|A\cap B_{a_t}|=2\). Hence
\[
|\mathcal P|
=
\sum_{t\in H}|A\cap B_{a_t}|
=
2|H|.
\]
Therefore \(18=2|H|\), so
\[
d_R(\mathbf0)=|H|=9.
\]

We now show that this is impossible. By
Lemma~\ref{lem:block-system}\textup{(v)}, exactly six block types contain
\(\mathbf0\). Since nine selected right-block occurrences contain
\(\mathbf0\), one of these six block types, say \(C\), occurs at most
once among \(B_{b_1},\ldots,B_{b_{12}}\).
Count incidences between the six points of \(C\) and the twelve selected
right blocks. At most one selected right block is equal to \(C\), and
hence contributes six incidences. Every other selected right block is
distinct from \(C\) and contributes exactly two. Therefore
\[
\sum_{y\in C}d_R(y)
\le
6+11\cdot2
=
28.
\]

On the other hand, \(\mathbf0\in C\) and
\(d_R(\mathbf0)=9\). Each of the other five points of \(C\) has right
degree at least four by Lemma~\ref{lem:tight-unused-block}\textup{(i)}.
Hence
\[
\sum_{y\in C}d_R(y)
\ge
9+5\cdot4
=
29.
\]
Thus the same quantity is at most \(28\) and at least \(29\), a
contradiction. Therefore no representation with at most twelve terms
exists, and
\(
\pf(K_{3,3}\sqcup K_{3,3})\ge13.
\)
\end{proof}

\noindent\textit{Proof of Theorem~\ref{thm:main}.}
Theorem~\ref{thm:main} follows from
Theorems~\ref{thm:upper-bound} and~\ref{thm:lower-bound}.
\hfill\(\square\)

\section{Concluding remarks}

Our result also shows that the Pfaffian number is not always multiplicative under disjoint union, since \(\pf(K_{3,3}\sqcup K_{3,3})=13<16=\pf(K_{3,3})^2\). 
It would be interesting to find the exact Pfaffian number of three copies of \(K_{3,3}\), and more generally of any number of copies.

The possible values of the Pfaffian number are also not well understood.
Norine showed that \(2,3,\) and \(5\) do not occur, while \(4\) and \(6\)
are known to occur~\cite{norine2009drawing,miranda2011matching}.
To the best of our knowledge, Theorem~\ref{thm:main} gives the first known
odd Pfaffian number greater than one. It remains open whether \(7,9,\) or
\(11\) can occur, and whether there are infinitely many odd Pfaffian numbers.

\appendix
\section{Omitted proofs}
\label{app:proofs}

\subsection{Coefficient vectors of \(K_{3,3}\)}
\label{app:coefficient-vectors}

\begin{proof}[Lemma~\ref{lem:one-copy-coefficient-vectors}]
Let \(S=(s_{ij})\) be a sign matrix. Multiplying the six coordinates of
\(v_S\) gives
$$
\prod_{\pi\in S_3}v_S(\pi)
=
\left(\prod_{\pi\in S_3}\sgn(\pi)\right)
\left(\prod_{\pi\in S_3}\prod_{i=1}^{3}s_{i,\pi(i)}\right)
=
-\,\prod_{i,j=1}^{3}s_{ij}^{\,2}
=
-1,
$$
since \(S_3\) has three even and three odd permutations, and each
\(s_{ij}\) occurs in exactly two permutation products.
Conversely, let \(v=(v_1,\ldots,v_6)\in\{\pm1\}^6\) satisfy
\(\prod_{j=1}^{6}v_j=-1\). Consider

$$
S=
\begin{pmatrix}
p&q&r\\
u&t&1\\
1&1&1
\end{pmatrix}.
$$
In the permutation order \eqref{eq:permutation-order}, the coefficient
vector of \(\det(S\circ B)\) is
$$
(pt,\,-p,\,-qu,\,q,\,ru,\,-rt).
$$
Set
$$
p=-v_2,\qquad q=v_4,\qquad
t=-v_1v_2,\qquad u=-v_3v_4,\qquad
r=-v_3v_4v_5.
$$
Then the first five coordinates are \(v_1,\ldots,v_5\), while

$$
-rt=-v_1v_2v_3v_4v_5=v_6,
$$
where the last equality follows from
\(v_1v_2v_3v_4v_5v_6=-1\). Hence \(v_S=v\).
\end{proof}

\begin{proof}[Corollary~\ref{cor:normalized-coefficient-vectors}]
Let \(\varepsilon\) be the first coordinate of \(v_S\). Then
\(\varepsilon v_S\) has first coordinate \(1\), so there are unique
\(a_1,\ldots,a_4\in\F_2\) for which its next four coordinates are
\((-1)^{a_1},\ldots,(-1)^{a_4}\). Since the product of its six
coordinates is \(-1\), its last coordinate is
\(-(-1)^{a_1+a_2+a_3+a_4}\). Thus, for
\(a=(a_1,a_2,a_3,a_4)\),
$$
\varepsilon v_S
=
\begin{pmatrix}
1&
(-1)^{a_1}&
(-1)^{a_2}&
(-1)^{a_3}&
(-1)^{a_4}&
-(-1)^{a_1+a_2+a_3+a_4}
\end{pmatrix}^{\mathsf T}
=o_a.
$$
Hence \(v_S=\varepsilon o_a\), and both \(a\) and \(\varepsilon\) are
unique.
\end{proof}

\subsection{Proof of the block-system lemma}
\label{app:block-system}

\begin{proof}[Lemma~\ref{lem:block-system}]
Fix \(a,x\in V\), and let \(d=|x+a|\). Since
\(\sum_{i=1}^{4}(-1)^{x_i+a_i}=4-2d\) and
\(|x|+|a|\equiv d\pmod 2\),
$$
e_x^{\mathsf T}o_a
=
1+\sum_{i=1}^{4}(-1)^{x_i+a_i}-(-1)^{|x|+|a|}
=
5-2d-(-1)^d.
$$
For \(d=0,1,2,3,4\), the corresponding values are
\(4,4,0,0,-4\). Hence \(f_a(x)\neq0\) exactly when
\(|x+a|\in\{0,1,4\}\), which is equivalent to
\(x+a\in D\), or \(x\in B_a\). Thus
$$
\operatorname{supp}(f_a)=B_a.
$$
If \(x\in B_a\), then \(|e_x^{\mathsf T}o_a|=4\). Together with
\eqref{eq:even-even-inner-product} and
\eqref{eq:normalized-function}, this gives
$$
|f_a(x)|
=
\begin{cases}
\dfrac13,&x=\mathbf0,\\
1,&x\neq\mathbf0.
\end{cases}
$$
This proves parts \textup{(i)} and \textup{(ii)}. Part \textup{(iii)}
follows immediately from \(B_a=a+D\) and \(|D|=6\).
For part \textup{(iv)}, let \(a\neq b\) and put
\(z=a+b\neq\mathbf0\). Translation gives
$$
|B_a\cap B_b|=|D\cap(z+D)|.
$$
The points in this intersection correspond to ordered representations
\(z=d+d'\) with \(d,d'\in D\). Every nonzero \(z\in V\) has exactly
two such representations:
$$
\begin{array}{c|c}
|z| & \text{ordered representations}\\
\hline
1 & \mathbf0+\varepsilon_i,\ \varepsilon_i+\mathbf0\\
2 & \varepsilon_i+\varepsilon_j,\ \varepsilon_j+\varepsilon_i\\
3 & \mathbf1+\varepsilon_i,\ \varepsilon_i+\mathbf1\\
4 & \mathbf0+\mathbf1,\ \mathbf1+\mathbf0
\end{array}
$$
where in the third row \(i\) is the unique zero coordinate of \(z\).
Therefore \(|B_a\cap B_b|=2\).
Finally, \(x\in B_a\) exactly when \(a=x+d\) for some \(d\in D\).
Since \(|D|=6\), exactly six blocks contain \(x\), proving
part \textup{(v)}.
\end{proof}

\subsection{The connected example}
\label{app:connected-upper}

\begin{proof}[Upper bound in Theorem~\ref{thm:connected-matching-covered}]
By Theorem~\ref{thm:upper-bound},
\[
\pf(K_{3,3}\sqcup K_{3,3})\le 13.
\]
Let \(B_1\) and \(B_2\) be the weighted biadjacency matrices of the two
copies of \(K_{3,3}\). The weighted biadjacency matrix of their disjoint
union is the block-diagonal matrix
\(
B_1\oplus B_2.
\)
Hence, by Lemma~\ref{lem:bipartite-matrix-formulation}, there exist
coefficients \(c_1,\ldots,c_{13}\) and sign matrices
\(S_1,\ldots,S_{13}\in\{\pm1\}^{6\times6}\) such that
\[
\operatorname{per}(B_1\oplus B_2)
=
\sum_{s=1}^{13}
c_s\det\bigl(S_s\circ(B_1\oplus B_2)\bigr).
\]
Since \(B_1\oplus B_2\) is block diagonal, only the two diagonal
\(3\times3\) blocks of \(S_s\) are relevant. Denote these blocks by
\(S_{1,s}\) and \(S_{2,s}\). Then
\[
\operatorname{per}(B_1)\operatorname{per}(B_2)
=
\sum_{s=1}^{13}
c_s\,
\det(S_{1,s}\circ B_1)
\det(S_{2,s}\circ B_2).
\]

We show that, for each \(s\in[13]\), the two signings
\(S_{1,s}\) and \(S_{2,s}\) can be extended to a signing of
\(\widetilde G\) without changing the sign of any corresponding
perfect matching. The same coefficients \(c_1,\ldots,c_{13}\) will
then give a thirteen-term Pfaffian representation of
\(\widetilde G\).

Fix \(s\in[13]\).
For an edge \(e\) of \(\widetilde G\), write
\(\operatorname{sgn}_s(e)\in\{\pm1\}\) for the sign assigned to \(e\)
in this extension.
In the \(i\)-th copy, regard
\(a_i,b_i,c_i,y_i\) as row vertices and
\(\alpha_i,\beta_i,\gamma_i,x_i\) as column vertices.
Keep the signs of all original edges other than
\(a_i\alpha_i\) unchanged. On the path
\[
a_i-x_i-y_i-\alpha_i,
\]
assign sign \(+1\) to \(x_iy_i\), and choose the signs of
\(a_ix_i\) and \(y_i\alpha_i\) so that
\[
\operatorname{sgn}_s(a_ix_i)\,
\operatorname{sgn}_s(y_i\alpha_i)
=
-\,S_{i,s}(a_i,\alpha_i).
\]
Finally, choose the signs of the two crossed edges so that
\[
\operatorname{sgn}_s(x_1y_2)\,
\operatorname{sgn}_s(x_2y_1)
=
-1.
\]
We claim that every perfect matching of \(\widetilde G\) has, in this
extended signing, the same sign as the corresponding perfect matching
of \(K_{3,3}\sqcup K_{3,3}\).

First observe that every perfect matching of \(\widetilde G\) uses
either both crossed edges or neither of them. Indeed, using exactly one
crossed edge would leave unequal numbers of unmatched row and column
vertices in each of the two copies.

Suppose first that neither crossed edge is used. In the \(i\)-th copy,
the matching either uses \(x_iy_i\), or it uses both
\(a_ix_i\) and \(y_i\alpha_i\).
If it uses \(x_iy_i\), deleting this edge gives a perfect matching
\(M_i\) of \(K_{3,3}\) avoiding \(a_i\alpha_i\). Since the new row
\(y_i\) is matched to the new column \(x_i\), the permutation sign is
unchanged, and \(x_iy_i\) has sign \(+1\). Thus the sign of the
matching is unchanged.
Now suppose that it uses \(a_ix_i\) and \(y_i\alpha_i\). Replacing
these two edges by \(a_i\alpha_i\) gives a perfect matching \(M_i\)
of \(K_{3,3}\). Relative to the matching obtained by using
\(a_i\alpha_i\) together with \(y_ix_i\), the assignments of the two
columns \(\alpha_i\) and \(x_i\) are exchanged. Hence the permutation
sign changes by a factor of \(-1\). The ratio of the edge-sign
contributions is
\[
\frac{
\operatorname{sgn}_s(a_ix_i)
\operatorname{sgn}_s(y_i\alpha_i)}
{S_{i,s}(a_i,\alpha_i)}
=
-1.
\]
Therefore the total change in sign is
\(
(-1)(-1)=1.
\)
Thus the sign is unchanged in this case as well.

It remains to consider a perfect matching using both crossed edges.
After deleting the crossed edges, the remaining edges correspond to
perfect matchings \(M_1,M_2\) of the two copies of \(K_{3,3}\), both
avoiding the distinguished edges \(a_i\alpha_i\).
Compare such a matching with the matching obtained from
\(M_1,M_2\) by using \(x_1y_1\) and \(x_2y_2\) instead of the crossed
edges. In the latter matching we have the two assignments
\[
y_1\mapsto x_1,
\qquad
y_2\mapsto x_2,
\]
whereas the crossed matching has
\[
y_1\mapsto x_2,
\qquad
y_2\mapsto x_1.
\]
Thus the two new column assignments are exchanged, so the permutation
sign changes by a factor of \(-1\). On the other hand, since
\(x_1y_1\) and \(x_2y_2\) both have sign \(+1\), the ratio of the
edge-sign contributions is
\[
\frac{
\operatorname{sgn}_s(x_1y_2)
\operatorname{sgn}_s(x_2y_1)}
{
\operatorname{sgn}_s(x_1y_1)
\operatorname{sgn}_s(x_2y_2)}
=
-1.
\]
Again the total change in sign is
\(
(-1)(-1)=1.
\)

Therefore, for each \(s\in[13]\), every perfect matching of
\(\widetilde G\) has the same coefficient in the extended signed
determinant as the corresponding perfect matching of
\(K_{3,3}\sqcup K_{3,3}\) has in the original \(s\)-th term.
Since the original thirteen terms, with coefficients
\(c_1,\ldots,c_{13}\), give coefficient \(1\) to every perfect
matching of \(K_{3,3}\sqcup K_{3,3}\), the same thirteen coefficients
give coefficient \(1\) to every perfect matching of
\(\widetilde G\).
Hence the thirteen extended signings form a Pfaffian representation of
\(\widetilde G\), and therefore
\[
\pf(\widetilde G)\le13.
\]
\end{proof}

\begin{credits}
\subsubsection{\ackname}

Priyanshu Pant is supported by the CSIR--UGC NET Junior Research
Fellowship. This work is also supported by ANRF Grant
ANRF/ARGM/2025/002341/TS.
\end{credits}

\bibliographystyle{splncs04}
\bibliography{references}

@article{norine2009drawing,
  author  = {Norine, Serguei},
  title   = {Drawing 4-{Pfaffian} Graphs on the Torus},
  journal = {Combinatorica},
  volume  = {29},
  number  = {1},
  pages   = {109--119},
  year    = {2009},
  doi     = {10.1007/s00493-009-2354-0}
}

@article{miranda2011matching,
  author  = {Miranda, Alberto Alexandre Assis and
             Lucchesi, Cl{\'a}udio Leonardo},
  title   = {Matching Signatures and {Pfaffian} Graphs},
  journal = {Discrete Mathematics},
  volume  = {311},
  number  = {4},
  pages   = {289--294},
  year    = {2011},
  doi     = {10.1016/j.disc.2010.10.014}
}

@article{robertson1999permanents,
  author  = {Robertson, Neil and
             Seymour, Paul D. and
             Thomas, Robin},
  title   = {Permanents, {Pfaffian} Orientations, and Even Directed Circuits},
  journal = {Annals of Mathematics},
  volume  = {150},
  number  = {3},
  pages   = {929--975},
  year    = {1999},
  doi     = {10.2307/121059}
}

@article{mccuaig2004polya,
  author  = {McCuaig, William},
  title   = {P{\'o}lya's Permanent Problem},
  journal = {The Electronic Journal of Combinatorics},
  volume  = {11},
  number  = {1},
  pages   = {R79},
  year    = {2004},
  doi     = {10.37236/1832}
}

@inproceedings{thomas2006survey,
  author    = {Thomas, Robin},
  title     = {A Survey of {Pfaffian} Orientations of Graphs},
  booktitle = {Proceedings of the International Congress of Mathematicians,
               Madrid 2006},
  editor    = {Sanz-Sol{\'e}, Marta and
               Soria, Javier and
               Varona, Juan Luis and
               Verdera, Joan},
  volume    = {III},
  pages     = {963--984},
  publisher = {European Mathematical Society},
  address   = {Z{\"u}rich},
  year      = {2006},
  doi       = {10.4171/022-3/47}
}

@article{valiant1979complexity,
  author  = {Valiant, Leslie G.},
  title   = {The Complexity of Computing the Permanent},
  journal = {Theoretical Computer Science},
  volume  = {8},
  number  = {2},
  pages   = {189--201},
  year    = {1979},
  doi     = {10.1016/0304-3975(79)90044-6}
}

@article{cayley1849determinants,
  author  = {Cayley, Arthur},
  title   = {Sur les d{\'e}terminants gauches},
  journal = {Journal f{\"u}r die reine und angewandte Mathematik},
  volume  = {38},
  pages   = {93--96},
  year    = {1849},
  doi     = {10.1515/crll.1849.38.93}
}

@incollection{kasteleyn1967graph,
  author    = {Kasteleyn, P. W.},
  title     = {Graph Theory and Crystal Physics},
  booktitle = {Graph Theory and Theoretical Physics},
  editor    = {Harary, Frank},
  publisher = {Academic Press},
  address   = {London},
  pages     = {43--110},
  year      = {1967}
}

@article{little1975characterization,
  author  = {Little, C. H. C.},
  title   = {A Characterization of Convertible \((0,1)\)-Matrices},
  journal = {Journal of Combinatorial Theory, Series B},
  volume  = {18},
  number  = {3},
  pages   = {187--208},
  year    = {1975},
  doi     = {10.1016/0095-8956(75)90048-9}
}

@misc{pant2026exponential,
  author        = {Pant, Priyanshu and Singh, Ranveer},
  title         = {Exponential Lower Bounds for the {Pfaffian} Number of Graphs},
  year          = {2026},
  eprint        = {2605.21077},
  archivePrefix = {arXiv},
  primaryClass  = {math.CO},
  note          = {arXiv:2605.21077},
  doi           = {10.48550/arXiv.2605.21077}
}

@inproceedings{moco2021signature,
  author    = {Costa Mo{\c{c}}o, Roberta R. M. and
               Miranda, Alberto Alexandre Assis and
               Nunes da Silva, C{\^a}ndida},
  title     = {The Signature Matrix for 6-{Pfaffian} Graphs},
  booktitle = {Proceedings of the XI Latin American Algorithms,
               Graphs and Optimization Symposium (LAGOS 2021)},
  series    = {Procedia Computer Science},
  volume    = {195},
  pages     = {298--305},
  publisher = {Elsevier},
  year      = {2021},
  doi       = {10.1016/j.procs.2021.11.037}
}

@inproceedings{junchaya2026lower,
  author    = {Junchaya, Enrique and
               Miranda, Alberto Alexandre Assis and
               Lucchesi, Cl{\'a}udio Leonardo},
  title     = {Lower Bounds for the {Pfaffian} Number of Graphs},
  booktitle = {52nd International Workshop on Graph-Theoretic
               Concepts in Computer Science (WG 2026)},
  editor    = {Goedgebeur, Jan and
               Rz{\k{a}}{\.z}ewski, Pawe{\l}},
  series    = {Leibniz International Proceedings in Informatics (LIPIcs)},
  volume    = {376},
  pages     = {28:1--28:15},
  publisher = {Schloss Dagstuhl -- Leibniz-Zentrum f{\"u}r Informatik},
  address   = {Dagstuhl, Germany},
  year      = {2026},
  doi       = {10.4230/LIPIcs.WG.2026.28}
}

@book{lovasz1986matching,
  author    = {Lov{\'a}sz, L{\'a}szl{\'o} and Plummer, Michael D.},
  title     = {Matching Theory},
  series    = {North-Holland Mathematics Studies},
  volume    = {121},
  publisher = {North-Holland},
  address   = {Amsterdam},
  year      = {1986}
}

@book{lucchesi2024perfect,
  author    = {Lucchesi, Cl{\'a}udio L. and Murty, U. S. R.},
  title     = {Perfect Matchings: A Theory of Matching Covered Graphs},
  series    = {Algorithms and Computation in Mathematics},
  publisher = {Springer},
  address   = {Cham},
  year      = {2024},
  doi       = {10.1007/978-3-031-47504-7}
}

\end{document}